\documentclass{amsart}
\usepackage[utf8]{inputenc}
\usepackage{amscd}
\usepackage{amssymb}
\usepackage{pstricks}
  \usepackage{todonotes}
\usepackage{comment}
\usepackage{empheq}
\usepackage{graphicx}
 \newtheorem{theorem}{Theorem}
 
 \newtheorem{lemma}{Lemma}
 \newtheorem{definition}{Definition}

\newtheorem{remark}{Remark}

\DeclareMathOperator{\rank}{Rank}

\numberwithin{equation}{section}
\numberwithin{lemma}{section}
\numberwithin{definition}{section}
\numberwithin{lemma}{section}
\numberwithin{theorem}{section}

\author{Ignacio Huerta}
%\author[Robledo]{Gonzalo Robledo}
%\author[Sep\'ulveda]{Daniel Sep\'ulveda}
% \email{grobledo@uchile.cl}

\address{Departamento de Matem\'atica, Universidad T\'ecnica Federico Santa Mar\'ia, Casilla 110-V, Valpara\'iso, Chile.}
\email{ignacio.huertan@usm.cl}
% \keywords{Controllability}

\title[Nonuniform complete observability]{Nonuniform complete observability; preservation by output feedback and duality results}
\keywords{time varying system; nonuniform complete observability; nonuniform bounded growth; output feedback; duality}
\subjclass{93B05, 93B07, 93B52, 93C05}
\thanks{This research has been partially supported by the grant FONDECYT Regular 1210733}

\begin{document}

\begin{abstract}
In this paper we propose a new observability property for nonautonomous linear control systems in finite dimension; the nonuniform complete observability, which is more general than the uniform complete observability. The main result of this work will prove that nonuniform complete observability is preserved via output feedback. In addition, the duality between this concept and the recently introduced concept of nonuniform complete controllability will be proved, leading to the result of preservation of nonuniform complete controllability via input feedback.
\end{abstract}

\maketitle

\section{Introduction}
\subsection{State of art} 
\textit{Observability} is a key concept in dynamical systems control theory, and was introduced by Rudolf E. Kalman in the 1960s. Kalman, best known for his pioneering work in the development of the Kalman filter \cite{Kalman1960} and optimal control theory \cite{Kalman}, also laid the foundation for the modern analysis of linear and nonlinear systems, where the concept of observability plays a fundamental role.

The observability is related to the ability to reconstruct the internal state of a control system from its outputs in a finite time interval.
Roughly speaking, in a control system, the initial state $x_0=x(t_0)\in\mathbb{R}^{n}$ is \textit{observable at time $t_{0}\geq 0$} if this state can be uniquely determined from the knowledge of the output in the interval $t\in[t_0,t_1]$. In addition, a control system is i) \textit{observable at time $t_{0}\geq 0$} if any initial state $x_{0}=x(t_0)$ is observable at time $t_{0}\geq 0$ and ii) \textit{completely observable} (CO) if $x_0$ is observable at any time $t_{0}\geq 0$. 

The study of observability can present certain similarities and differences between \textit{linear time-invariant} (LTI) systems and \textit{linear time-varying} (LTV) systems, which we will detail below.

\subsubsection{\textnormal{\textbf{LTI perspective}}}
In formal terms, let us consider a LTI system described by the equations:
\begin{subequations}
  \begin{empheq}[left=\empheqlbrace]{align}
    & \dot{x}(t)=Ax(t)+Bu(t), \label{LTIsystem1a} \\
    & y(t)=Cx(t)+Du(t), \label{LTIsystem1b}
  \end{empheq}
\end{subequations}
where $x(t)\in\mathbb{R}^{n}$ is the \textit{state} of the system, $u(t)\in\mathbb{R}^{p}$ is the \textit{input}, $y(t)\in\mathbb{R}^{m}$ is the \textit{output} and $A, B, C, D$ are matrices with order $n\times n$, $n\times p$, $m\times n$ y $m\times p$ respectively. By considering LTI systems (see \cite[Chapter 9]{Rugh}), one of the ways of verifying observability is by means of the algebraic Kalman criterion, which states that the system is observable if and only if the observability matrix:
$$\mathcal{O}=\left[\begin{array}{c}
C \\
C A \\
C A^2 \\
\vdots \\
C A^{n-1}
\end{array}\right]\in M_{nm\times n}(\mathbb{R})$$
has rank equal to $n$, which coincides with the number of states of the system. From this criterion, we can notice that the observability does not depend on the matrices $B$ and $D$, therefore, from here on, we will be talking about the observability of the system $(A,C)$.

In addition, it is possible to study the observability of the system \eqref{LTIsystem1a}-\eqref{LTIsystem1b} from the perspective of the associated Gramian matrix \cite[Chapter 9]{Rugh}, namely, the nonsingularity property for any value $t_1>t_0$ of the square matrix $W_{\mathcal{O}}(t_0,t_1)$ described by 
$$W_{\mathcal{O}}(t_0,t_1)=\displaystyle\int_{t_0}^{t_1}e^{A^{T}(s-t_0)}C^{T}Ce^{A(s-t_0)}\;ds,$$
thus completing two criteria that are equivalent to the observability property. 

On the other hand, a concept closely related to observability is controllability, which measures the ability to manipulate the state of the system, in a finite time interval, using external inputs. There is a duality between controllability and observability, which means that observability problems can be analyzed using controllability results, and vice versa, i.e., there are results associated with the controllability matrix and controllability gram matrix (see \cite{Kalman} and \cite{Rugh}) similar to the aforementioned results equivalent to observability. Based on the existing literature, this duality will be analyzed in the non-uniform and LTV systems context in which this work will be developed.

\subsubsection{\textnormal{\textbf{LTV framework}}} There are obvious differences in the observability analysis when comparing LTI and LTV systems, precisely because of the dependence on a $t$ parameter of the matrices that were previously considered constant. However, there are results that allow testing the observability of a system that develop ideas similar to those of LTI systems.   

Let us consider a LTV control system
\begin{subequations}
  \begin{empheq}[left=\empheqlbrace]{align}
    & \dot{x}(t)=A(t)x(t), \label{control1a} \\
    & y(t)=C(t)x(t), \label{control1b}
  \end{empheq}
\end{subequations}
where $x(t)\in\mathbb{R}^{n}$ is the \textit{state vector}, $y(t)\in\mathbb{R}^{m}$ is the \textit{output}, while that $t\mapsto A(t)\in M_{n\times n}(\mathbb{R})$ and $t\mapsto C(t)\in M_{m\times n}(\mathbb{R})$ are matrix valued functions for any $t\geq 0$, having orders  $n\times n$ and $m\times n$ respectively. In addition, 
$A(\cdot)$ and $C(\cdot)$ are measurable and bounded on finite intervals.
The transition matrix of \eqref{control1a} is denoted by $\Phi_{A}(t,s)$.

Following the same line as presented in the previous subsection, there are results that allow us to prove the observability of an LTV system. On the one hand (see \cite{Rugh}), 
suppose that $C(t)$ is a $q$ times continuously differentiable matrix and $A(t)$ is a $(q-1)$ times continuously differentiable matrix. Furthermore, defining 
$$
\mathcal{L}(t)=\left[\begin{array}{c}
\mathbf{L}_0(t) \\
\vdots \\
\mathbf{L}_q(t)
\end{array}\right], \quad\textnormal{where}\quad \left\{\begin{array}{l}
\mathbf{L}_0(t)=C(t) \\
\mathbf{L}_i(t)=\mathbf{L}_{i-1}(t) A(t)+\dot{\mathbf{L}}_{i-1}(t), \quad i=1, \ldots, q ,
\end{array}\right.
$$
% where
% $$
% \left\{\begin{array}{l}
% \mathbf{L}_0(t)=C(t) \\
% \mathbf{L}_i(t)=\mathbf{L}_{i-1}(t) A(t)+\dot{\mathbf{L}}_{i-1}(t), \quad i=1, \ldots, q ,
% \end{array}\right.
% $$
then the LTV system \eqref{control1a}-\eqref{control1b} is observable on $\left[t_0, t_f\right]$ if for some $t_a \in\left[t_0, t_f\right]$, we have that $\rank(\mathcal{L}(t_a))=n$.

On the other hand, there exists a well known necessary and sufficient condition ensuring both observability at time $t_{0}$ and complete observability, which is stated in terms of the observability Gramian matrix, usually defined by
\begin{equation}
 \label{gramianobservability}
M(t_0,t_f)=\displaystyle\int_{t_0}^{t_f}\Phi_{A}^{T}(s,t_0)C^{T}(s)C(s)\Phi_{A}(s,t_0)\;ds.
\end{equation}

The control system \eqref{control1a}-\eqref{control1b} is {\it observable at time $t_{0}\geq 0$} if and only if there exists
$t_{f}>t_{0}\geq 0$ such that $M(t_{0},t_{f})>0$. In addition, it is {\it completely observable} if and only if for 
any $t_{0}\geq 0$, there exists $t_{f}>t_{0}$ such that $M(t_{0},t_{f})>0$. We refer the reader to \cite{Anderson67,Kreindler} for a detailed description. 

As in the context of time-invariant systems, we say without distinction that system \eqref{control1a}-\eqref{control1b} or the pair $(A,C)$ is completely observable. In this sense, this notation option can be applied to any type of definition associated with the observability concept.

A concept that is more restrictive than complete observability, and which in turn implies it, corresponds to the case where $t_0=t$ and $t_f=t+\sigma$, for a fixed $\sigma>0$, which corresponds to the \textit{uniform complete observability}, whose formal definition is given below (see \cite[See Definition 3.3.3]{Ioannou10.5555/211527} and \cite[Section 1.5]{sastry10.5555/63437} for more detailed references):

\begin{definition}    The LTV control system \eqref{control1a}-\eqref{control1b} is said to be uniformly completely observable \textnormal{(UCO)} on $[0,+\infty)$ if there exists a fixed constant $\sigma>0$ and positive numbers $\bar{\alpha}_0(\sigma)$ and $\bar{\alpha}_1(\sigma)$ such that the following relations hold for all $t\geq 0$: 
\begin{equation}
\label{UCO}
0<\bar{\alpha}_0(\sigma)I\leq M(t,t+\sigma)\leq \bar{\alpha}_1(\sigma)I.
\end{equation}
\end{definition}

It is usual to find in the literature (see \cite{BATISTA20173598}) that $A(\cdot)$ and $C(\cdot)$ are initially considered to be bounded matrices, therefore in \eqref{UCO} only requires 
$$\bar{\alpha}_0(\sigma)I\leq M(t,t+\sigma),$$
since the second inequality is obtained as a consequence of the bounding of the matrices $A(\cdot)$ and $C(\cdot)$.

\subsection{Novelty of this work}
In this paper we introduce and characterize the concept of non-uniform complete observability (NUCO), which is more general than the classical Kalman property of uniform complete observability, but more restrictive than complete observability.

The first novel result corresponds to the \ref{dualidad} Theorem, in which, based on the definition of complete non-uniform controllability, a close relationship between both properties is established, allowing to flow from one to the other and their respective consequences. 

The second novel result of this work is the main theorem of the paper (Theorem \ref{observableconK}), providing the preservation of nonuniform complete observability considering an output feedback. An additional novel consequence of this result is the use of duality to prove that complete nonuniform controllability is preserved by input feedback.

\subsection{Notations}
Given $M\in M_{n}(\mathbb{R})$, $M^{T}$ is the transpose. The inner product of two vectors $x,y\in \mathbb{R}^{n}$ is denoted by
$\langle y, x\rangle=x^{T}y$ and the euclidean norm of a vector will be denoted by $|x|=\sqrt{\langle x,x\rangle}$. 
% Given $A\in M_{n}(\mathbb{R})$, the matrix norm induced by $|\cdot|$ is
% \begin{displaymath}
% \|A\|=\sup\limits_{\eta\neq 0}\frac{|A\eta|}{|\eta|}=\sqrt{\lambda_{\max}(A^{T}A)},
% \end{displaymath}
% where $\lambda_{\max}(A^{T}A)$ is the maximum eigenvalue of $A^{T}A$, while its minimum eigenvalue
% will be denoted by $\lambda_{\min}(A^{T}A)$.

A symmetric matrix $M=M^{T}\in M_{n}(\mathbb{R})$ is semi--positive definite if
$x^{T}Mx =\langle M x, x\rangle  \geq 0$ for any real vector $x\neq 0$, and this property will be denoted as $M \geq 0$.
In case that the above inequalities are strict, we say that $M=M^{T}$ is positive definite.

Given two matrices $M, N\in M_{n}(\mathbb{R})$, we write $M\leq N$ if  $N-M\geq0$ or equivalently, if $\langle M x, x\rangle \leq\langle N x, x\rangle$ for any $x \in \mathbb{R}^{n}$.

% If $M\in M_{n}(\mathbb{R})$ is positive definite and $\kappa$ is a positive scalar such that $M\leq  \kappa I$ is verified,
% it will be useful to recall that
% \begin{equation}
% \label{TRVP}
% \operatorname{tr}(M)\leq n\kappa \quad \textnormal{and} \quad 0\leq \lambda_{\min}(M)\leq \lambda_{\max}(M) \leq \kappa.
% \end{equation}

Finally, we will denote by $\mathcal{B}$ the set of functions $\alpha\colon \mathbb{R}\to \mathbb{R}$ mapping bounded sets into bounded sets.

\subsection{Structure of the article}
This article offers a new definition in the framework of observability; nonuniform complete observability, in addition to analyzing the well-known duality it has with controllability. 
In this sense, section 2 details the basic concepts associated with nonuniform complete controllability, whose definition is established in terms of its Gramian matrix and which is strongly related to the concept of nonuniform bounded growth. 

In section 3 we detail the scope of the definition of uniform complete observability from the study of the duality it admits with the nonuniform complete controllability and an example that is related to nonuniform bounded growth.  

On the other hand, Section 4.1 presents the main result of this work, which consists in proving the conservation of nonuniform complete observability by output feedback. Finally, a direct application of duality and the main theorem can be seen in section 4.2; the preservation of complete nonuniform controllability by input feedback is proved.

\section{Controllability: basic notions and nonuniform preliminaries}

As proved in the article \cite{HMR}, the \textit{nonuniform complete controllability} is a concept that is less restrictive than the uniform complete controllability. In that sense, by considering the LTV control system 
\begin{equation}
    \label{LTVControlabilidad}
    \dot{x}(t)=A(t)x(t)+B(t)u(t),
\end{equation}
where $x(t)$, $A(t)$ were described in the subsection 1.3, $t\mapsto B(t)\in M_{n\times p}(\mathbb{R})$ is measurable and bounded on finite intervals, and $u(t)\in\mathbb{R}^{p}$ is the \textit{input}. In addition, the system \eqref{control1a} is known as the \textit{plant} of the system \eqref{LTVControlabilidad}.

We know that if the plant of the system \eqref{LTVControlabilidad} does not satisfy the \textit{Kalman condition} \cite[Prop. 3, ii)]{HMR}, then the uniform complete controllability of the control system is not satisfied (see \cite[Statement (5.19)]{Kalman}). 

For this reason, the concept of nonuniform bounded growth is presented below, which corresponds to a generalization of the Kalman condition and, in a way, is the starting point for defining a new type of controllability that adapts to systems that do not necessarily satisfy the Kalman condition.

\begin{definition} 
 % \label{NUBGT}
The plant of \eqref{LTVControlabilidad} has a nonuniform bounded growth on the interval $J\subset [0,+\infty)$ if there exist constants $K_{0}>0$, $a>0$ and $\varepsilon>0$ such that its transition matrix satisfies
\begin{equation*}
% \label{Phibound}
\|\Phi_{A}(t,\tau)\|\leq K_{0}e^{\varepsilon \tau }e^{a|t-\tau|}\quad \textnormal{for any $t,\tau\in J$}.
\end{equation*}
\end{definition}

% \begin{definition}
%     The Gramian of controllability associated to the system \eqref{control1a} is defined by
% \begin{equation*}
% % \label{contmatrix}
% W(t_0,t_1)=\displaystyle\int_{t_0}^{t_1}\Phi_{A}(t_0,s)B(s)B^{T}(s)\Phi_{A}^{T}(t_0,s)\;ds.
% \end{equation*}
% \end{definition}

In view of this, in \cite{HMR} a new type of controllability, called \textit{nonuniform complete controllability}, is established.

\begin{definition}
% \label{DEFNUCC}
The linear control system \eqref{control1a} is said to be nonuniformly completely controllable \textnormal{(NUCC)}, if there exist fixed numbers $\mu_0\geq0$, $\mu_1\geq0$, $\tilde{\mu}_0\geq0$, $\tilde{\mu}_1\geq0$ and functions $\alpha_0(\cdot), \beta_{0}(\cdot),\alpha_1(\cdot),\beta_1(\cdot):[0,+\infty)\to(0,+\infty)$ such that for any $t\geq 0$, there exists $\sigma_0(t)>0$ with:
\begin{equation}
\label{alpha0alpha1}
    0<e^{-2\mu_0 t}\alpha_0(\sigma)I\leq W(t,t+\sigma)\leq e^{2\mu_1 t}\alpha_1(\sigma)I,
\end{equation}
\begin{equation}
\label{K}
    0<e^{-2\tilde{\mu}_0 t}\beta_0(\sigma) I\leq K(t,t+\sigma)\leq e^{2\tilde{\mu}_1 t}\beta_1(\sigma) I, 
\end{equation}
for every $\sigma\geq \sigma_{0}(t)$, where 
$$W(t_0,t_1)=\displaystyle\int_{t_0}^{t_1}\Phi_{A}(t_0,s)B(s)B^{T}(s)\Phi_{A}^{T}(t_0,s)\;ds$$
is the \textit{Gramian of controllability associated of the system \eqref{LTVControlabilidad}} and
$$K(t,t+\sigma)=\Phi_{A}(t+\sigma,t)W(t,t+\sigma)\Phi_{A}^{T}(t+\sigma,t).$$
\end{definition}

\begin{remark}
% \label{notacioncontrolabilidad}
    Based on this definition, we say
without distinction that the system \eqref{LTVControlabilidad} or the pair $(A,B)$ is nonuniformly completely controllable.
\end{remark}

Taking into account the relationship between uniform complete controllability and the Kalman condition \cite{Kalman}, then let us consider the following definition, which will be coupled to the definition of nonuniform complete controllability.

\begin{lemma}
% \label{NUIMBU}
If the control system \eqref{LTVControlabilidad} is nonuniformly completely controllable, then there exist $\nu>0$ and a function  $\alpha(\cdot)\in\mathcal{B}$ satisfying
\begin{equation}
\label{crec-acot}
\|\Phi_{A}(t,\tau)\|\leq e^{\nu\tau}\,\alpha(|t-\tau|)  \quad \textnormal{for any $t,\tau\geq0$}.
\end{equation}
\end{lemma}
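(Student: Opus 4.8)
The plan is to extract the growth bound directly from the lower and upper Gramian estimates \eqref{alpha0alpha1}, exploiting the identity relating $W$, $K$, and the transition matrix. First I would fix $\tau\geq 0$ and $t\geq\tau$, write $\sigma=t-\tau$, and assume $\sigma\geq\sigma_0(\tau)$ (the case of small $\sigma$, i.e. $t$ close to $\tau$, will be handled separately since on bounded time differences the transition matrix is bounded by the local boundedness of $A(\cdot)$, and that bound can be absorbed into $\alpha$). From the definition of NUCC we have $K(\tau,\tau+\sigma)=\Phi_A(\tau+\sigma,\tau)W(\tau,\tau+\sigma)\Phi_A^T(\tau+\sigma,\tau)$, and from \eqref{K} the left inequality gives $e^{-2\tilde\mu_0\tau}\beta_0(\sigma)I\leq K(\tau,\tau+\sigma)$, while from \eqref{alpha0alpha1} the right inequality gives $W(\tau,\tau+\sigma)\leq e^{2\mu_1\tau}\alpha_1(\sigma)I$.

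The key step is the elementary matrix inequality: for any invertible $P$ and symmetric $0\le Q\le cI$, one has $PQP^T\le c\,PP^T$, hence $e^{-2\tilde\mu_0\tau}\beta_0(\sigma)I\le K(\tau,\tau+\sigma)=\Phi_A(\tau+\sigma,\tau)W(\tau,\tau+\sigma)\Phi_A^T(\tau+\sigma,\tau)\le e^{2\mu_1\tau}\alpha_1(\sigma)\,\Phi_A(\tau+\sigma,\tau)\Phi_A^T(\tau+\sigma,\tau)$. Comparing the extreme terms, $\Phi_A(\tau+\sigma,\tau)\Phi_A^T(\tau+\sigma,\tau)\ge \dfrac{\beta_0(\sigma)}{\alpha_1(\sigma)}\,e^{-2(\tilde\mu_0+\mu_1)\tau}I$; taking the smallest eigenvalue and noting $\|\Phi_A(\tau+\sigma,\tau)^{-1}\|^{-2}=\lambda_{\min}\big(\Phi_A\Phi_A^T\big)$, this bounds $\|\Phi_A(\tau,\tau+\sigma)\|=\|\Phi_A(\tau+\sigma,\tau)^{-1}\|$ from above. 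That controls $\Phi_A$ for $t\le\tau$. For the forward direction $t\ge\tau$ I would run the symmetric argument: from \eqref{alpha0alpha1} the lower bound $e^{-2\mu_0\tau}\alpha_0(\sigma)I\le W(\tau,\tau+\sigma)=\Phi_A(\tau,\tau+\sigma)K(\tau,\tau+\sigma)\Phi_A^T(\tau,\tau+\sigma)$ together with the upper bound $K(\tau,\tau+\sigma)\le e^{2\tilde\mu_1\tau}\beta_1(\sigma)I$ from \eqref{K} yields $\Phi_A(\tau,\tau+\sigma)\Phi_A^T(\tau,\tau+\sigma)\ge \dfrac{\alpha_0(\sigma)}{\beta_1(\sigma)}e^{-2(\mu_0+\tilde\mu_1)\tau}I$, which bounds $\|\Phi_A(\tau+\sigma,\tau)\|$ from above.

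Assembling the pieces, set $\nu:=2\max\{\tilde\mu_0+\mu_1,\ \mu_0+\tilde\mu_1\}$ (or any common upper bound, after possibly enlarging so a single exponent works on both sides), and define $\alpha(r)$ for $r\ge 0$ by taking the maximum of $\sqrt{\alpha_1(r)/\beta_0(r)}$, $\sqrt{\beta_1(r)/\alpha_0(r)}$, and the a priori bound on $\|\Phi_A(t,\tau)\|$ over the regime $|t-\tau|\le\sigma_0(\tau)$ — this last quantity needs a uniform-in-$\tau$ estimate, which is the one delicate point. I would address it by observing that \eqref{crec-acot} is only required to hold for $\tau\geq 0$ with the factor $e^{\nu\tau}$ available, so the short-time transition matrix $\|\Phi_A(\tau+s,\tau)\|$ for $s\in[0,\sigma_0(\tau)]$ can be controlled by a Gronwall estimate against $\int_\tau^{\tau+\sigma_0(\tau)}\|A(r)\|\,dr$; one then just needs this to grow no faster than $e^{\nu\tau}\alpha(|t-\tau|)$ for a suitable $\alpha\in\mathcal B$, which follows from the standing hypothesis that $A(\cdot)$ is bounded on finite intervals together with the structure of the bounds above. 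The main obstacle is precisely this bookkeeping — reconciling the short-time regime with the Gramian-derived long-time bound, and checking that the resulting $\alpha$ genuinely lies in $\mathcal{B}$, i.e. maps bounded sets to bounded sets — rather than any single inequality, each of which is routine.
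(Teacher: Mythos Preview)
The paper does not actually supply a proof of this lemma; it is stated as a known result (the ``nonuniform Kalman property'') and attributed, together with Theorem~\ref{p2l3}, to the reference [HMR]. So there is no in-paper argument to compare against, and your proposal should be judged on its own.

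Your long-range argument is the right one and is essentially the classical Kalman computation adapted to the nonuniform setting: from $K=\Phi W\Phi^{T}$ and the two-sided Gramian bounds \eqref{alpha0alpha1}--\eqref{K} you correctly extract lower bounds on $\Phi_A\Phi_A^{T}$ in both time directions, which translate into the desired upper bounds on $\|\Phi_A(t,\tau)\|$ for $|t-\tau|\geq\sigma_0(\tau)$. That part is fine.

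The genuine gap is the short-time regime $|t-\tau|<\sigma_0(\tau)$. Your proposed fix---Gronwall against $\int_\tau^{\tau+\sigma_0(\tau)}\|A(r)\|\,dr$ and then an appeal to ``$A(\cdot)$ bounded on finite intervals''---does not close. The hypothesis that $A$ is bounded on finite intervals gives \emph{no} quantitative dependence on $\tau$: for each fixed $\tau$ the integral is finite, but nothing forces $\exp\!\big(\int_\tau^{\tau+\sigma_0(\tau)}\|A\|\big)$ to be dominated by $e^{\nu\tau}\alpha(|t-\tau|)$ for any fixed $\nu$ and any $\alpha\in\mathcal{B}$, especially since $\sigma_0(\tau)$ is allowed to depend on $\tau$ in an unspecified way. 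In other words, you are trying to prove a \emph{structured} growth bound from a purely qualitative local boundedness assumption, and that inference is not valid. The standard way to cover the short-time gap is not Gronwall on $A$ but rather a cocycle/factorization trick that stays inside the Gramian estimates---e.g.\ writing $\Phi_A(t,\tau)$ as a product of transitions over intervals of length at least $\sigma_0(\cdot)$ anchored at nearby base points, so that every factor is controlled by the Gramian bounds you already established. A secondary issue in the same spirit: your candidate $\alpha(r)=\max\{\sqrt{\alpha_1/\beta_0},\sqrt{\beta_1/\alpha_0},\dots\}$ is only in $\mathcal{B}$ if $\alpha_i,\beta_i$ (and their reciprocals) are locally bounded, which the NUCC definition does not assume; this also needs to be argued, not asserted.
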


\begin{remark}
    As introduced in \cite{HMR}, the expression \eqref{crec-acot} is known as the nonuniform Kalman property. Additionally, we can note that the nonuniform bounded growth is a particular case of the nonuniform Kalman property.
\end{remark}

From the nonuniform complete controllability and the nonuniform Kalman condition, we finish this section by considering the following result about the relationship between these concepts.

\begin{theorem}
\label{p2l3}
Any two of the properties \eqref{alpha0alpha1}, \eqref{K} and \eqref{crec-acot}
imply the third one.
\end{theorem}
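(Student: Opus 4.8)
The plan is to prove the three implications among \eqref{alpha0alpha1}, \eqref{K} and \eqref{crec-acot} by exploiting the algebraic identity $K(t,t+\sigma)=\Phi_{A}(t+\sigma,t)W(t,t+\sigma)\Phi_{A}^{T}(t+\sigma,t)$, which conjugates the two Gramians by the transition matrix, so that spectral bounds on one Gramian translate into spectral bounds on the other once we control $\|\Phi_{A}(t+\sigma,t)\|$ and $\|\Phi_{A}(t,t+\sigma)\|$. Throughout I would use the elementary fact that for a positive definite symmetric $P$ and any invertible $S$, the bound $cI\le P\le dI$ implies $\|S\|^{-2}\, c\, I \le S^{-T}PS^{-1}\le \|S^{-1}\|^{2}\, d\, I$ (and similarly $\|S^{-1}\|^{-2} c I \le SPS^{T}\le \|S\|^{2} d I$), together with the identity $\Phi_{A}(t,t+\sigma)=\Phi_{A}(t+\sigma,t)^{-1}$.

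First, \emph{$\eqref{alpha0alpha1}\wedge\eqref{crec-acot}\Rightarrow\eqref{K}$.} From \eqref{crec-acot} with $(t,\tau)=(t+\sigma,t)$ and $(t,\tau)=(t,t+\sigma)$ we get $\|\Phi_{A}(t+\sigma,t)\|\le e^{\nu t}\alpha(\sigma)$ and $\|\Phi_{A}(t,t+\sigma)\|\le e^{\nu(t+\sigma)}\alpha(\sigma)$. Conjugating the lower bound in \eqref{alpha0alpha1} by $S=\Phi_{A}(t+\sigma,t)$ gives $K(t,t+\sigma)\ge \|\Phi_{A}(t,t+\sigma)\|^{-2}e^{-2\mu_0 t}\alpha_0(\sigma)I\ge e^{-2(\mu_0+\nu)t}e^{-2\nu\sigma}\alpha(\sigma)^{-2}\alpha_0(\sigma)I$, and conjugating the upper bound gives $K(t,t+\sigma)\le \|\Phi_{A}(t+\sigma,t)\|^{2}e^{2\mu_1 t}\alpha_1(\sigma)I\le e^{2(\mu_1+\nu)t}\alpha(\sigma)^{2}\alpha_1(\sigma)I$. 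Reading off $\tilde\mu_0=\mu_0+\nu$, $\tilde\mu_1=\mu_1+\nu$, $\beta_0(\sigma)=e^{-2\nu\sigma}\alpha(\sigma)^{-2}\alpha_0(\sigma)$, $\beta_1(\sigma)=\alpha(\sigma)^{2}\alpha_1(\sigma)$ yields \eqref{K}. The implication $\eqref{K}\wedge\eqref{crec-acot}\Rightarrow\eqref{alpha0alpha1}$ is entirely symmetric, conjugating \eqref{K} by $S=\Phi_{A}(t,t+\sigma)=\Phi_{A}(t+\sigma,t)^{-1}$ and using the same two norm bounds from \eqref{crec-acot}.

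The substantive implication is \emph{$\eqref{alpha0alpha1}\wedge\eqref{K}\Rightarrow\eqref{crec-acot}$}, and this is where I expect the main obstacle. Combining the upper bound in \eqref{K} with the lower bound in \eqref{alpha0alpha1}, the identity $K=\Phi_{A}(t+\sigma,t)W\Phi_{A}^{T}(t+\sigma,t)$ gives, for any unit vector $v$, $e^{-2\mu_0 t}\alpha_0(\sigma)\,|\Phi_{A}^{T}(t+\sigma,t)v|^{2}\le \langle W(t,t+\sigma)\Phi_{A}^{T}(t+\sigma,t)v,\Phi_{A}^{T}(t+\sigma,t)v\rangle = \langle K(t,t+\sigma)v,v\rangle\le e^{2\tilde\mu_1 t}\beta_1(\sigma)$, hence $\|\Phi_{A}^{T}(t+\sigma,t)\|=\|\Phi_{A}(t+\sigma,t)\|\le e^{(\mu_0+\tilde\mu_1)t}\sqrt{\beta_1(\sigma)/\alpha_0(\sigma)}$; symmetrically, pairing the upper bound in \eqref{alpha0alpha1} with the lower bound in \eqref{K} and using $W=\Phi_{A}(t,t+\sigma)K\Phi_{A}^{T}(t,t+\sigma)$ bounds $\|\Phi_{A}(t,t+\sigma)\|$ by $e^{(\tilde\mu_0+\mu_1)t}\sqrt{\alpha_1(\sigma)/\beta_0(\sigma)}$. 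This controls $\Phi_{A}$ only at the discrete "jump" pairs $(t+\sigma,t)$; the difficulty is upgrading to all pairs $(t,\tau)$ with $\tau\ge 0$, $t\ge 0$. The fix is the semigroup/cocycle property $\Phi_{A}(t,\tau)=\Phi_{A}(t,\tau')\Phi_{A}(\tau',\tau)$: I would first fix $\sigma$ large enough that $\sigma\ge\sigma_0(\tau)$ is permissible (here one must be careful, since $\sigma_0$ depends on $\tau$; one chooses $\sigma$ as a function of $\tau$ and $|t-\tau|$, or invokes that the bounds in \eqref{alpha0alpha1}–\eqref{K} hold for all $\sigma\ge\sigma_0(t)$ so a single large $\sigma$ works on any compact set of base points), write the interval from $\tau$ to $t$ as at most $\lceil |t-\tau|/\sigma\rceil +1$ steps of length $\le\sigma$, and multiply the corresponding estimates, collecting the $t$-dependent exponentials into a single $e^{\nu\tau}$ factor (absorbing a term like $e^{C|t-\tau|}$ into $\alpha(|t-\tau|)$) and the $\sigma$-dependent constants into a function of $|t-\tau|$ lying in $\mathcal{B}$. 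Getting the exponent on $\tau$ (rather than on $t$ or on $\max(t,\tau)$) to come out correctly, and checking that the resulting $\alpha(\cdot)$ genuinely maps bounded sets to bounded sets, are the delicate bookkeeping points; I would handle the two cases $t\ge\tau$ and $t<\tau$ separately, using in the second case that $\Phi_{A}(t,\tau)=\Phi_{A}(\tau,t)^{-1}$ together with the bound already obtained on backward-in-time propagation.
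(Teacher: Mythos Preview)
The paper does not actually prove Theorem~\ref{p2l3}. It is stated without proof in Section~2 as a preliminary result imported from the companion paper~\cite{HMR} on nonuniform complete controllability (the surrounding text opens with ``As proved in the article~\cite{HMR}\ldots'' and the theorem is followed immediately by Section~3). There is therefore no proof in this document against which to compare your proposal.

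For what it is worth, your argument for the two easy implications---those that assume \eqref{crec-acot}---is correct and is the standard one: the conjugation identity $K=\Phi_{A}(t+\sigma,t)\,W\,\Phi_{A}^{T}(t+\sigma,t)$ transfers two-sided spectral bounds once $\|\Phi_{A}(t+\sigma,t)\|$ and $\|\Phi_{A}(t,t+\sigma)\|$ are controlled by \eqref{crec-acot}. For the substantive implication $\eqref{alpha0alpha1}\wedge\eqref{K}\Rightarrow\eqref{crec-acot}$, your pairing of the lower bound on $W$ with the upper bound on $K$ (and symmetrically) to extract $\|\Phi_{A}(t+\sigma,t)\|\le e^{(\mu_0+\tilde\mu_1)t}\sqrt{\beta_1(\sigma)/\alpha_0(\sigma)}$ is the right first move. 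The issue you yourself flag---that $\sigma_0(t)$ may depend on $t$, so one cannot fix a single step length and iterate the cocycle identity uniformly---is the genuine obstacle here; your sketch (let the step depend on the base point, absorb factors of the form $e^{C|t-\tau|}$ into $\alpha(|t-\tau|)\in\mathcal{B}$, treat $t\ge\tau$ and $t<\tau$ separately) is plausible but is not yet a complete argument, and this is exactly where the proof in~\cite{HMR} must do its real work.
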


\section{Nonuniform Complete Observability; Duality and example}

In this section we will define \textit{nonuniform complete observability} and explore the scope of this definition. Specifically, in subsection 3.1 the duality between this concept and the nonuniform complete controllability will be studied, by identifying the respective dual system of \eqref{control1a}-\eqref{control1b}. On the other hand, subsection 3.2 involves an example where it is studied under which conditions it is possible to have nonuniform complete observability, by pointing out the similarity that this example has with the one shown in \cite[Subsection 3.2]{HMR}.

\subsection{Dual point of view between controllability and observability}

The following concept corresponds to a generalization of the uniform complete observability of the system \eqref{control1a}-\eqref{control1b}, i.e., it will also be described in terms of the observability Gramian detailed in \eqref{gramianobservability}.

% The Gramian matrix of observability of the system (\ref{control1a})-(\ref{control1b}) is described by
% \begin{equation}
% \label{gramianobs}
% M(t_{1},t_{0})=\int_{t_{0}}^{t_{1}}\Phi_{A}^{T}(s,t_0)C^{T}(s)C(s)\Phi_{A}(s,t_0)\,ds.
% \end{equation}

\begin{definition}
% \label{NUCO}
The system \eqref{control1a}-\eqref{control1b} is nonuniformly completelly observable \textnormal{(NUCO)} if there exist fixed numbers $\nu_0\geq0$, $\nu_1\geq0$ and functions $\vartheta_0$, $\vartheta_1:[0,+\infty)\to (0,+\infty)$ such that for any $t\geq0$, there exists $\sigma_0(t)>0$ with:
\begin{equation}
\label{Mobserv}
\vartheta_0(\sigma)e^{-2\nu_0 t}I\leq M(t,t+\sigma)\leq \vartheta_1(\sigma) e^{2\nu_1 t}
\end{equation}
for every $\sigma\geq\sigma_0(t)$.
\end{definition}

\begin{remark}
    % \label{Robserv}
The previous definition deserves some comments:
\begin{itemize}
% \item[a)] Following the same idea of remark \ref{notacioncontrolabilidad} and since the definition \ref{NUCO} is independent of $B(t)$, then we say without distinction that system \eqref{control1a} or the pair $(A,C)$ is nonuniformly completely observable.
\item[a)] If $\omega:=\max\{\nu_0, \nu_1\}$, the estimate \eqref{Mobserv} can be stated in terms of $\omega$. Also, as done in the dichotomy $\&$ stability literature \cite{Barreira,Chu,Zhou-16}, the numbers $\nu_0$ and $\nu_1$  will be called the gramian nonuniformities of observability.
\item[b)] When $\omega=0$ and $t\mapsto \sigma_0(t)$ is constant for all $t\geq0$, we recover the uniform complete observability definition.  
\item[c)] If \eqref{Mobserv} is satisfied, it is straightforward to verify that NUCO implies complete observability.
\item[d)] In order to facilitate the writing, in some occasions when the context requires it, we will use the notation  $\sigma_{t}:=\sigma_0(t)$.
\end{itemize}
\end{remark}

In order to pave the way for detailing the close relationship between observability and controllability, we consider the dual system of 
\eqref{control1a}-\eqref{control1b} (or $(A,C)$) described by
% \begin{subequations}
%   \begin{empheq}[left=\empheqlbrace]{align}
\begin{equation}
\label{traspuesto1a}
\dot{x}(t)=-A^{T}(t)x(t)+C^{T}(t)u(t), 
    % & y(t)=B^{T}(t)x(t), \label{traspuesto1b}
  % \end{empheq}
% \end{subequations}
\end{equation}
with $x(t)\in\mathbb{R}^{n}$, $u(t)\in\mathbb{R}^{m}$, while the dimensions of $A(t)$ and $C(t)$ are $n\times n$ and $m\times n$ respectively. In addition, the transition matrix of the plant of \eqref{traspuesto1a} is denoted by $\Psi_{-A^{T}}(\cdot,\cdot)$, which satisfied the following relation with the transition matrix of the system \eqref{control1a}:
\begin{equation}
\label{Psi}
    \Psi_{-A^{T}}(t,\tau)=\Phi_{A}^{T}(\tau,t),
\end{equation}
indeed, we have that:
$$\begin{array}{rcl}
\displaystyle\frac{d}{dt}(\Psi_{-A^{T}}(t,\tau))&=&\displaystyle\frac{d}{dt}(\Phi_{A}^{T}(\tau,t))=\frac{d}{dt}(\Phi_{A}(\tau,t))^{T}=-(\Phi_{A}(\tau,t)A(t))^{T},\\
&=&-A^{T}(t)\Phi_{A}^{T}(\tau,t)
=-A^{T}(t)\Psi_{-A^{T}}(t,\tau).
\end{array}$$

The following theorem establishes the relationship between nonuniform complete controllability and nonuniform complete observability by means of the dual system givne by \eqref{traspuesto1a}.

\begin{theorem} 
\label{dualidad}
Assume that the system \eqref{control1a} admits nonuniform bounded growth. 
    The system \eqref{control1a}-\eqref{control1b} is nonuniformly completely observable if and only if the system \eqref{traspuesto1a} is nonuniformly completely controllable. In other words, $(A,C)$ is \textnormal{NUCO} if and only if $(-A^{T},C^{T})$ is \textnormal{NUCC}.
\end{theorem}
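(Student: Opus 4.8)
The plan is to reduce the statement to a single Gramian identity plus one application of Theorem \ref{p2l3}. First I would compute the controllability Gramian of the dual pair $(-A^{T},C^{T})$: writing the plant of \eqref{traspuesto1a} as $\dot x=-A^{T}(t)x$ with transition matrix $\Psi_{-A^{T}}$ and input matrix $C^{T}$, its controllability Gramian is $\int_{t_0}^{t_1}\Psi_{-A^{T}}(t_0,s)C^{T}(s)C(s)\Psi_{-A^{T}}^{T}(t_0,s)\,ds$, and substituting $\Psi_{-A^{T}}(t_0,s)=\Phi_{A}^{T}(s,t_0)$ from \eqref{Psi} turns this into exactly the observability Gramian $M(t_0,t_1)$ of \eqref{control1a}-\eqref{control1b} given in \eqref{gramianobservability}. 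Hence, denoting by $W_{d}$ the controllability Gramian of \eqref{traspuesto1a}, we have $W_{d}(t,t+\sigma)=M(t,t+\sigma)$ for all $t\ge0$ and $\sigma>0$, so the two-sided bound \eqref{Mobserv} that defines \textnormal{NUCO} is precisely the bound \eqref{alpha0alpha1} for the dual system (with $\mu_0=\nu_0$, $\mu_1=\nu_1$, $\alpha_0=\vartheta_0$, $\alpha_1=\vartheta_1$ and the same $\sigma_0(\cdot)$).

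Given this identification, the backward implication is immediate and uses nothing about the plant: if $(-A^{T},C^{T})$ is \textnormal{NUCC} it satisfies in particular \eqref{alpha0alpha1}, which through $W_{d}=M$ is exactly \eqref{Mobserv}, so $(A,C)$ is \textnormal{NUCO}. The forward implication is where the nonuniform bounded growth hypothesis enters, because \textnormal{NUCO} only yields \eqref{alpha0alpha1} for the dual system while \textnormal{NUCC} additionally demands the estimate \eqref{K}. I would first transfer the growth bound to the dual plant: from $\|\Phi_{A}(t,\tau)\|\le K_0e^{\varepsilon\tau}e^{a|t-\tau|}$ one obtains $\|\Psi_{-A^{T}}(t,\tau)\|=\|\Phi_{A}(\tau,t)\|\le K_0e^{\varepsilon t}e^{a|t-\tau|}$, and then the elementary inequality $e^{\varepsilon t}\le e^{\varepsilon\tau}e^{\varepsilon|t-\tau|}$ (valid since $t\le\tau+|t-\tau|$) gives $\|\Psi_{-A^{T}}(t,\tau)\|\le K_0e^{\varepsilon\tau}e^{(a+\varepsilon)|t-\tau|}$, i.e. the nonuniform Kalman property \eqref{crec-acot} for the dual plant with $\nu=\varepsilon$ and $\alpha(r)=K_0e^{(a+\varepsilon)r}\in\mathcal{B}$. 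Now the dual system satisfies both \eqref{alpha0alpha1} and \eqref{crec-acot}, so Theorem \ref{p2l3} supplies \eqref{K} (for $\sigma$ beyond some threshold $\tilde\sigma_0(t)$), and taking $\sigma_0(t)$ to be the maximum of the two threshold functions shows that $(-A^{T},C^{T})$ is \textnormal{NUCC}.

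All the computations involved are routine; the only points that require care are the bookkeeping of the exponential nonuniformities and of the threshold functions $\sigma_0(t)$ when passing through Theorem \ref{p2l3}, together with the observation that the bounded growth assumption is used solely to recover the extra $K$-estimate required by \textnormal{NUCC} on top of the Gramian two-sided bound. I do not expect a genuine obstacle.
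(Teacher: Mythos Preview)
Your proposal is correct and follows essentially the same route as the paper: the Gramian identity $W_{d}(t,t+\sigma)=M(t,t+\sigma)$ via \eqref{Psi}, the immediate backward implication, and for the forward implication the transfer of the growth bound to the dual plant (the paper does exactly your computation, writing $\varepsilon t=\varepsilon(t-\tau)+\varepsilon\tau$) followed by Theorem~\ref{p2l3} to recover \eqref{K}. Your explicit mention of taking the maximum of the two threshold functions $\sigma_0(t)$ is a detail the paper leaves implicit.
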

\begin{proof}
    The Gramian matrix of observability of the system (\ref{control1a})-(\ref{control1b}) (or $(A,C)$) is given by \eqref{gramianobservability}, with $t_0=t$ and $t_f=t+\sigma$. In addition, by considering \eqref{Psi}, then we have that:
    \begin{equation}
    \label{RelacionObsCont}
    \begin{array}{rcl}
    M(t,t+\sigma)&=&\displaystyle\int_{t}^{t+\sigma} \Phi_{A}^{T}(s,t)C^{T}(s)C(s)\Phi_{A}(s,t)\; ds,\\
    &=&\displaystyle\int_{t}^{t+\sigma} \Psi_{-A^{T}}(t,s)C^{T}(s)C(s)\Psi_{-A^{T}}^{T}(t,s)\; ds,\\
    &=&W(t,t+\sigma),
    \end{array}
    \end{equation}
    where this Gramian matrix of controllability $W(t,t+\sigma)$ corresponds to the system \eqref{traspuesto1a} (or $(-A^{T},C^{T})$). In that sense, if \eqref{Mobserv} is satisfied, then the identity \eqref{RelacionObsCont} allow us to ensure that, for the system $(-A^{T},C^{T})$, the Gramian inequality  \eqref{alpha0alpha1} is verified. In order to obtain the inequality \eqref{K}, namely, to estimate the term $\Psi_{-A^{T}}(t+\sigma,t)W(t,t+\sigma)\Psi_{-A^{T}}^{T}(t+\sigma,t)$, we will verify that the system \eqref{traspuesto1a} admits nonuniform bounded growth. In fact, by considering the relation \eqref{Psi} and by assuming that the system \eqref{control1a} admits nonuniform bounded growth with constants $K_0$, $a$ and $\varepsilon$, then we have that for any $t, \tau\in\mathbb{R}_{0}^{+}$:
    $$
    \begin{array}{rcl}
    \left \| \Psi_{-A^{T}}(t,\tau)\right \|&=&\left \| \Phi_{A}^{T}(\tau,t)\right\|=\left \| \Phi_{A}(\tau,t)\right \|,\\
    &\leq &K_0e^{a|\tau-t|+\varepsilon t}=K_0e^{a|\tau-t|+\varepsilon ((t-\tau)+\tau)},\\
    &\leq& K_0 e^{(a+\varepsilon)|t-\tau|+\varepsilon \tau}
    \end{array}
    $$
    i.e, the plant of system \eqref{traspuesto1a} verified the nonuniform bounded growth property with constants $K_0$, $a+\varepsilon$ and $\varepsilon$. Therefore, by considering Theorem \ref{p2l3}, we can ensure the estimation \eqref{K} for this system and we conclude that the system \eqref{traspuesto1a} is nonuniformly completely controllable.

    On the other hand, if the system \eqref{traspuesto1a} (or $(-A^{T},C^{T})$) is nonuniformly completely controllable, then the inequality \eqref{alpha0alpha1}, where $W(t,t+\sigma)$ corresponds to the Gramian controllability of the system $(-A^{T},C^{T})$ (see \eqref{RelacionObsCont}), is satisfied. In addition, by considering \eqref{RelacionObsCont}, it is straightforward to conclude that \eqref{Mobserv} is verified and therefore, the system \eqref{control1a}-\eqref{control1b} (or $(A,C)$) is nonuniformly completely observable.
\end{proof}

\subsection{An example of a nonuniformly completely observable system} Based on the duality offered by the previous theorem, this example is strongly focused on the nonuniform bounded growth property and follows the same line of the example presented in \cite{HMR}.
If we assume that the linear system (\ref{control1a}) admits the property of nonuniform bounded growth with parameters $K_0$, $a$ and $\varepsilon$, it is interesting to explore some conditions on $C(t)$ leading to the estimate (\ref{Mobserv}).
In order to get it, we will assume that there exist $\gamma_0$, $\gamma_1>0$ and $c_0$, $c_1>0$ such that for any $t\geq0$: 
    $$c_0e^{-2\gamma_0 t}I\leq C^{T}(t)C(t)\leq c_1e^{2\gamma_1 t}I.$$
    % \todo{norma de $A$ es igual a $A^{T}$}
    
    Note that the previous condition includes the particular case of constant $C$, with $C^{T}C>0$. 
    Based on the transition matrix property, we have that for any vector $x\neq0$, it is verified that  
$
x=\Phi_{A}^{T}(s,t)\Phi_{A}^{T}(t,s)x
$
and then:    
    $$|x|^{2}\leq\left\|\Phi_{A}^{T}(s,t)\right\|^{2}|\Phi_{A}^{T}(t,s)x|^{2}\Rightarrow \frac{|x|^{2}}{\left\|\Phi_{A}(s,t)\right\|^{2}}\leq|\Phi_{A}^{T}(t,s)x|^{2}, $$
    and by considering the nonuniform bounded growth hypothesis, we can ensure that
    \begin{equation*}
    % \label{Phicotaabajo}
    \left\|\Phi_{A}(s,t)\right\|\leq K_0e^{a|s-t|+\varepsilon t}\Leftrightarrow\frac{1}{K_0e^{a|s-t|+\varepsilon t}}\leq \frac{1}{\left\|\Phi_{A}(s,t)\right\|}.
    \end{equation*}

    From the above, we will obtain the right-hand estimate of (\ref{alpha0alpha1}). For any $t\geq0$, there exist $\sigma_0(t)>0$ such that for any $\sigma\geq\sigma_0(t)$ and $x\neq0$:
    \begin{displaymath}
    \begin{array}{rcl}
    \displaystyle\int_{t}^{t+\sigma}x^{T}\Phi_{A}^{T}(t,s)C^{T}(s)C(s)\Phi_{A}(t,s)x\;ds&\leq& \displaystyle\int_{t}^{t+\sigma}K_0^{2}e^{2a(s-t)+2\varepsilon s}c_1^{2}e^{2\gamma_1 s}|x|^{2}\;ds\\\\
    &=&\displaystyle K_0^{2}c_1^{2}|x|^{2}e^{-2at}\int_{t}^{t+\sigma}e^{2(a+\varepsilon+\gamma_1)s}\;ds,
    % &\leq&\displaystyle e^{2\eta t}\frac{K_0^{2}b^{2}}{2(a+\eta-w)}\left [e^{2(a+\eta-w)\sigma}-1\right]
    \end{array}
    \end{displaymath}
which implies that
 \begin{displaymath}
    \begin{array}{rcl}
    x^{T}M(t,t+\sigma)x&\leq& 
    \displaystyle \frac{K_0^{2}c_1^{2}}{2(a+\varepsilon+\gamma_1)}e^{-2at}\left [e^{2(a+\varepsilon+\gamma_1)(t+\sigma)}-e^{2(a+\varepsilon+\gamma_1)t}\right]|x|^{2},\\\\
    &=&\displaystyle \frac{K_0^{2}c_1^{2}}{2(a+\varepsilon+\gamma_1)}\left [e^{2(a+\varepsilon+\gamma_1)\sigma}-1\right]e^{2(\varepsilon+\gamma_1)t}|x|^{2}.\\\\
    % &\leq&\displaystyle e^{2\eta t}\frac{K_0^{2}b^{2}}{2(a+\eta-w)}\left [e^{2(a+\eta-w)\sigma}-1\right]
    \end{array}
    \end{displaymath}

Similarly, notice that
    $$\begin{array}{rcl}
         \displaystyle\int_{t}^{t+\sigma}x^{T}\Phi_{A}^{T}(t,s)C^{T}(s)C(s)\Phi_{A}(t,s)x\;ds &\geq&  \displaystyle\int_{t}^{t+\sigma}\frac{1}{K_0^{2}}e^{-2a(s-t)-2\varepsilon t}c_0^{2}e^{-2\gamma_0 s}|x|^{2}\;ds,\\\\
         &=&\displaystyle\frac{c_0^{2}}{K_0^{2}}|x|^{2}e^{2(a-\varepsilon)t}\int_{t}^{t+\sigma}e^{-2(a+\gamma_0)s}\;ds,
    \end{array}$$
which implies
\begin{displaymath}
    \begin{array}{rcl}
         x^{T}M(t,t+\sigma)x &\geq&  \displaystyle\frac{c_0^{2}}{K_0^{2}2(a+\beta_0)}e^{2(a-\eta)t}\left[e^{-2(a+\beta_0)t}-e^{-2(a+\beta_0)(t+\sigma)}\right]|x|^{2}\\\\
         &=&\displaystyle \frac{c_0^{2}}{K_0^{2}2(a+\gamma_0)}\left[1-e^{-2(a+\gamma_0)\sigma}\right]e^{-2(\varepsilon+\gamma_0)t}|x|^{2}
    \end{array}
\end{displaymath}
and the nonuniform complete observability follows by considering 
$$\vartheta_{0}(\sigma)=\displaystyle \frac{c_0^{2}}{K_0^{2}2(a+\gamma_0)}\left[1-e^{-2(a+\gamma_0)\sigma}\right]\;\textnormal{and}\;\vartheta_1(\sigma)=\displaystyle \frac{K_0^{2}c_1^{2}}{2(a+\varepsilon+\gamma_1)}\left [e^{2(a+\varepsilon+\gamma_1)\sigma}-1\right].$$

As an extra comment, in the previous proof we can notice that $\sigma_0(t)=\sigma_0>0$ is constant and works in order to prove the inequalities for all $t\geq0$.

\section{Observability conservation by output feedback in a nonuniform context} 
\subsection{Main result} In this subsection we will study the invariance of the nonuniform complete observability when we consider an output feedback, which corresponds to the main result of this work. The ideas and results that we will see below are strongly based on \cite{Zhang2015ObservabilityCB}, but they are adapted to the nonuniform notion of the concepts of bounded growth and complete observability and their respective bounds.

To begin, consider the following LTV system
\begin{subequations}
  \begin{empheq}[left=\empheqlbrace]{align}
    & \dot{x}(t)=A(t)x(t)+B(t)u(t), \label{LTVsystem1a} \\
    & y(t)=C(t)x(t), \label{LTVsystem1b}
  \end{empheq}
\end{subequations}
where $A(t)\in M_{n\times n}(\mathbb{R})$, $B(t)\in M_{n\times p}(\mathbb{R})$ and $C(t)\in M_{m\times n}(\mathbb{R})$. Moreover, by using an output feedback $u(t)=-F(t)y(t)$, for some matrix $F(\cdot)\in M_{p\times m}(\mathbb{R})$, and by defining $K(t):=B(t)F(t)$, then we replace this output feedback in \eqref{LTVsystem1a}-\eqref{LTVsystem1b} and we obtain the following two systems:

$$
\left\{\begin{array}{lcl}
\dot{x}(t)&=&A(t)x(t)-K(t)y(t),\\
y(t)&=&C(t)x(t),
\end{array}
\right.
$$
% \begin{subequations}
%   \begin{empheq}[left=\empheqlbrace]{align}
%      & \dot{x}(t)=A(t)x(t)-K(t)y(t),\\ 
%     % \label{ClosedLoopa} \\
%     & y(t)=C(t)x(t), 
%     % \label{ClosedLoopb}
%   \end{empheq}
% \end{subequations}
% \begin{equation}
% \label{ClosedLoop}
% \begin{array}{rcl}
% \dot{x}(t)&=&A(t)x(t)-L(t)y(t)\\
% y(t)&=&C(t)x(t)
% \end{array}
% \end{equation}
and
\begin{subequations}
  \begin{empheq}[left=\empheqlbrace]{align}
    & \dot{x}(t)=(A(t)-K(t)C(t))x(t), \label{ClosedLoop2a} \\
    & y(t)=C(t)x(t), \label{ClosedLoop2b}
  \end{empheq}
\end{subequations}
% \begin{equation}
% \label{ClosedLoop2}
% \begin{array}{rcl}
% \dot{x}(t)&=&(A(t)-L(t)C(t))x(t)\\
% y(t)&=&C(t)x(t)
% \end{array}
% \end{equation}
which are equivalents. In this way, the systems \eqref{ClosedLoop2a}-\eqref{ClosedLoop2b}, and additionally \eqref{ClosedLoop2a}-\eqref{ClosedLoop2b}, have the same property of observability than \eqref{LTVsystem1a}-\eqref{LTVsystem1b}.

In order to pave the way towards the main result of this section, we will show the following technical result, which ensure that the nonuniform bounded growth property is preserved in a closed loop system.

\begin{lemma}
\label{RobustezNUBG}
    Suppose that the plant of system \eqref{LTVsystem1a} admits nonuniform bounded growth with constants $K_0$, $a$ and $\varepsilon$. In addition, by considering the system \eqref{ClosedLoop2a}, suppose that there exist $\delta>0$ and $\gamma>0$ such that:
    \begin{equation}
    \label{Kcondition}
    \|K(z)\|\leq \mathcal{K}e^{\delta z},
    \end{equation}
    \begin{equation} 
    \label{Ccondition}
    \|C(z)\|\leq \mathcal{C}e^{-\gamma z}.
    \end{equation}
    If we have the following condition about the constants
    \begin{equation}
        \label{conditionGED}
    \gamma-(\varepsilon+\delta)>0,
    \end{equation}
    then the system \eqref{ClosedLoop2a} admits nonuniform bounded growth property.
\end{lemma}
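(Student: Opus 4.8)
The plan is to control the transition matrix $\Phi_{A-KC}(\cdot,\cdot)$ of the closed loop system \eqref{ClosedLoop2a} in terms of the transition matrix $\Phi_{A}(\cdot,\cdot)$ of the plant, via the variation of constants formula
\[
\Phi_{A-KC}(t,\tau)=\Phi_{A}(t,\tau)-\int_{\tau}^{t}\Phi_{A}(t,s)\,K(s)C(s)\,\Phi_{A-KC}(s,\tau)\;ds,
\]
and then to absorb the perturbation term by a Gronwall estimate. The relevant consequence of \eqref{Kcondition}--\eqref{Ccondition} is the bound $\|K(s)C(s)\|\leq \mathcal{K}\mathcal{C}\,e^{(\delta-\gamma)s}$, so the perturbation kernel decays like $e^{(\delta-\gamma)s}$; once it is multiplied by the factor $e^{\varepsilon s}$ coming from the nonuniform bounded growth of the plant, the effective Gronwall kernel behaves like $e^{(\varepsilon+\delta-\gamma)s}$, which is integrable on $[0,+\infty)$ precisely because of \eqref{conditionGED}. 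This is exactly what will let the new bound keep the same nonuniformity $\varepsilon$ as the plant.

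First I would treat $t\geq\tau\geq0$. Taking norms above and using $\|\Phi_{A}(t,\tau)\|\leq K_{0}e^{\varepsilon\tau}e^{a(t-\tau)}$ together with $\|\Phi_{A}(t,s)\|\leq K_{0}e^{\varepsilon s}e^{a(t-s)}$ for $\tau\leq s\leq t$, one arrives at
\[
\|\Phi_{A-KC}(t,\tau)\|\leq K_{0}e^{\varepsilon\tau}e^{a(t-\tau)}+K_{0}\mathcal{K}\mathcal{C}\int_{\tau}^{t}e^{(\varepsilon+\delta-\gamma)s}\,e^{a(t-s)}\,\|\Phi_{A-KC}(s,\tau)\|\;ds.
\]
Putting $g(s):=e^{-a(s-\tau)}\|\Phi_{A-KC}(s,\tau)\|$ and multiplying the inequality by $e^{-a(t-\tau)}$ gives
\[
g(t)\leq K_{0}e^{\varepsilon\tau}+K_{0}\mathcal{K}\mathcal{C}\int_{\tau}^{t}e^{(\varepsilon+\delta-\gamma)s}\,g(s)\;ds,
\]
so Gronwall's inequality yields $g(t)\leq K_{0}e^{\varepsilon\tau}\exp\!\left(K_{0}\mathcal{K}\mathcal{C}\int_{\tau}^{t}e^{(\varepsilon+\delta-\gamma)s}\,ds\right)$. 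Since $\gamma-(\varepsilon+\delta)>0$ the last integral is bounded by $1/(\gamma-\varepsilon-\delta)$ uniformly in $t\geq\tau\geq0$, and undoing the substitution produces
\[
\|\Phi_{A-KC}(t,\tau)\|\leq\widetilde{K}_{0}\,e^{\varepsilon\tau}e^{a(t-\tau)},\qquad\widetilde{K}_{0}:=K_{0}\exp\!\left(\frac{K_{0}\mathcal{K}\mathcal{C}}{\gamma-\varepsilon-\delta}\right).
\]

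For $0\leq t<\tau$ I would run the same computation with the integral taken over $[t,\tau]$ and with $\|\Phi_{A}(t,s)\|\leq K_{0}e^{\varepsilon s}e^{a(s-t)}$ for $t\leq s\leq\tau$; setting $v(s):=e^{-a(\tau-s)}\|\Phi_{A-KC}(s,\tau)\|$ this time leads to a backward inequality $v(t)\leq K_{0}e^{\varepsilon\tau}+K_{0}\mathcal{K}\mathcal{C}\int_{t}^{\tau}e^{(\varepsilon+\delta-\gamma)s}v(s)\,ds$, whose reverse Gronwall version gives the same constant $\widetilde{K}_{0}$. Combining both cases, $\|\Phi_{A-KC}(t,\tau)\|\leq\widetilde{K}_{0}\,e^{\varepsilon\tau}e^{a|t-\tau|}$ for all $t,\tau\geq0$, so \eqref{ClosedLoop2a} has nonuniform bounded growth with constants $\widetilde{K}_{0}$, $a$ and $\varepsilon$. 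I expect the only delicate point to be the exponential bookkeeping: one must check that $e^{\varepsilon\tau}$ factors cleanly out of the Gronwall estimate and that, after stripping off $e^{a|t-\tau|}$, the remaining kernel is exactly $e^{(\varepsilon+\delta-\gamma)s}$, so that \eqref{conditionGED} is precisely the condition under which its integral over $[0,+\infty)$ remains bounded independently of $t$ and $\tau$.
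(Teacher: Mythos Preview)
Your proof is correct and follows essentially the same route as the paper: the variation of constants formula, then strip off the $e^{a|t-\tau|}$ factor and apply Gronwall with kernel $K_{0}\mathcal{K}\mathcal{C}\,e^{(\varepsilon+\delta-\gamma)s}$, whose integral is bounded by $K_{0}\mathcal{K}\mathcal{C}/(\gamma-\varepsilon-\delta)$ thanks to \eqref{conditionGED}, yielding the same constant $\widetilde{K}_{0}=K_{0}\exp\!\bigl(K_{0}\mathcal{K}\mathcal{C}/(\gamma-\varepsilon-\delta)\bigr)$. You are in fact slightly more careful than the paper in that you also treat the case $t<\tau$ via the backward Gronwall inequality, whereas the paper only writes out the forward case $t_{2}\geq t_{1}$.
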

\begin{proof}
    The transition matrix of the system \eqref{ClosedLoop2a}, where 
    \begin{equation}
        \label{ATilde}
        \tilde{A}(t):=A(t)-K(t)C(t),
    \end{equation}
    is given by 
    \begin{equation} 
    \label{SolucionA-KC}
    \Phi_{\tilde{A}}(t_2,t_1):=\Phi_{A}(t_2,t_1)-\int_{t_1}^{t_2}\Phi_{A}(t_2,s)K(s)C(s)\Phi_{\tilde{A}}(s,t_1)\; ds
    \end{equation}
    and by assuming the nonuniform bounded growth property of the plant of system \eqref{LTVsystem1a}, we have the following estimate:
    $$
    \begin{array}{rcl}
    \left\|\Phi_{\tilde{A}}(t_2,t_1)\right\|&\leq&\displaystyle K_0e^{a(t_2-t_1)+\varepsilon t_1}+\int_{t_1}^{t_2}K_0e^{a(t_2-s)+\varepsilon s}\mathcal{K}e^{\delta s}\mathcal{C}e^{-\gamma s}\left\|\Phi_{\tilde{A}}(s,t_1)\right\|\; ds,\\\\
    e^{-at_2}\left\|\Phi_{\tilde{A}}(t_2,t_1)\right\|&\leq&\displaystyle K_0e^{-at_1+\varepsilon t_1}+\int_{t_1}^{t_2}K_0e^{-as}\mathcal{K}\mathcal{C}e^{(\varepsilon+\delta-\gamma)s}\left\|\Phi_{\tilde{A}}(s,t_1)\right\|\; ds,\\\\
    \end{array}
    $$
and by considering the Gronwall inequality and the condition \eqref{conditionGED}, we obtain that:
$$\begin{array}{rcl}
e^{-at_2}\left\|\Phi_{\tilde{A}}(t_2,t_1)\right\|&\leq&\displaystyle K_0e^{-at_1+\varepsilon t_1}e^{\int_{t_1}^{t_2}K_0\mathcal{K}\mathcal{C}e^{(\varepsilon+\delta-\gamma)s}\; ds},\\
\left\|\Phi_{\tilde{A}}(t_2,t_1)\right\|&\leq&\displaystyle K_0e^{\frac{K_0\mathcal{K}\mathcal{C}}{\gamma-(\varepsilon+\delta)}}e^{a(t_2-t_1)+\varepsilon t_1},
\end{array}$$
which prove that the system \eqref{ClosedLoop2a} admits nonuniform bounded growth.
\end{proof}

The following theorem establishes the preservation of nonuniform complete observability by output feedback, whose proof follows the main result of \cite{Zhang2015ObservabilityCB} (see \cite{f3ec37272184481db229561565c2d5fb} for another similar result).

\begin{theorem}
\label{observableconK}
    Suppose that the plant of the system \eqref{LTVsystem1a} admits nonuniform bounded growth with constants $K_0$, $a$ and $\varepsilon$. By assuming that the conditions \eqref{Kcondition}, \eqref{Ccondition}, \eqref{conditionGED} and 
    \begin{equation}
        \label{conditionAED}
        -a+\varepsilon+\delta>0
    \end{equation}
    are satisfied, then the system \eqref{control1a}-\eqref{control1b} (or $(A,C)$) is nonuniformly completely observable if and only if the system \eqref{ClosedLoop2a}-\eqref{ClosedLoop2b} (or $(A-KC,C)$) is nonuniformly completely observable.
\end{theorem}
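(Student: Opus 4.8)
The plan is to use the fact that output feedback does not alter the output map but only rewrites it: for every initial state, the open-loop output on an interval $[t,t+\sigma]$ and the closed-loop output on the same interval are related by a bounded, boundedly invertible Volterra operator on $L^{2}([t,t+\sigma];\mathbb{R}^{m})$, and then to transport the two Gramian bounds in \eqref{Mobserv} through that operator while controlling the dependence on $t$.

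First I would make the correspondence explicit. With $\tilde{A}(t)=A(t)-K(t)C(t)$ as in \eqref{ATilde}, multiplying \eqref{SolucionA-KC} on the left by $C(s)$ and applying the result to an arbitrary $x_{0}\in\mathbb{R}^{n}$ gives, for $t\le s$,
\begin{equation*}
C(s)\Phi_{\tilde{A}}(s,t)x_{0}=C(s)\Phi_{A}(s,t)x_{0}-\int_{t}^{s}C(s)\Phi_{A}(s,r)K(r)\,C(r)\Phi_{\tilde{A}}(r,t)x_{0}\;dr .
\end{equation*}
Writing $y_{x_{0}}(s)=C(s)\Phi_{A}(s,t)x_{0}$ and $\tilde{y}_{x_{0}}(s)=C(s)\Phi_{\tilde{A}}(s,t)x_{0}$ for the open- and closed-loop outputs issued from $x_{0}$ at time $t$, this reads $y_{x_{0}}=(I+\mathcal{T}_{t,\sigma})\tilde{y}_{x_{0}}$ in $L^{2}([t,t+\sigma];\mathbb{R}^{m})$, where $\mathcal{T}_{t,\sigma}$ is the Volterra operator with $m\times m$ kernel $(s,r)\mapsto C(s)\Phi_{A}(s,r)K(r)$ on $t\le r\le s\le t+\sigma$. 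Being a Volterra operator, $\mathcal{T}_{t,\sigma}$ is quasi-nilpotent, so $I+\mathcal{T}_{t,\sigma}$ is invertible and $\tilde{y}_{x_{0}}=(I+\mathcal{T}_{t,\sigma})^{-1}y_{x_{0}}$. Since $\langle M(t,t+\sigma)x_{0},x_{0}\rangle=\|y_{x_{0}}\|_{L^{2}}^{2}$ and $\langle M_{\tilde{A}}(t,t+\sigma)x_{0},x_{0}\rangle=\|\tilde{y}_{x_{0}}\|_{L^{2}}^{2}$, where $M_{\tilde{A}}$ denotes the observability Gramian of $(A-KC,C)$, these two identities yield, in the order of symmetric matrices,
\begin{equation*}
M(t,t+\sigma)\le\|I+\mathcal{T}_{t,\sigma}\|^{2}M_{\tilde{A}}(t,t+\sigma),\qquad M_{\tilde{A}}(t,t+\sigma)\le\|(I+\mathcal{T}_{t,\sigma})^{-1}\|^{2}M(t,t+\sigma).
\end{equation*}

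Next I would bound the operator norms uniformly in $t$. Combining the nonuniform bounded growth of the plant of \eqref{LTVsystem1a} with \eqref{Kcondition} and \eqref{Ccondition}, for $t\le r\le s\le t+\sigma$ one has $\|C(s)\Phi_{A}(s,r)K(r)\|\le K_{0}\mathcal{C}\mathcal{K}\,e^{(a-\gamma)s+(\varepsilon+\delta-a)r}$. Here \eqref{conditionGED} gives $\gamma>\varepsilon+\delta$ and \eqref{conditionAED} gives $\varepsilon+\delta>a$, hence both $a-\gamma<0$ and $\varepsilon+\delta-\gamma<0$; evaluating the (linear) exponent at the vertices of the triangle $\{t\le r\le s\le t+\sigma\}$ shows that it is maximized at $r=s=t$, so the kernel is bounded by $M_{0}:=K_{0}\mathcal{C}\mathcal{K}$, a constant independent of $t$ and $\sigma$. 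The standard iterated-kernel estimate $\|\mathcal{T}_{t,\sigma}^{\,n}\|\le(M_{0}\sigma)^{n}/n!$ then gives $\|I+\mathcal{T}_{t,\sigma}\|\le e^{M_{0}\sigma}$ and $\|(I+\mathcal{T}_{t,\sigma})^{-1}\|\le e^{M_{0}\sigma}=:\Lambda(\sigma)$, both independent of $t$. I expect this step to be the main obstacle: the whole point of conditions \eqref{conditionGED} and \eqref{conditionAED} is precisely to pin down the sign of the exponent so that $\Lambda$ carries no residual growth in $t$, which would otherwise worsen the Gramian nonuniformities.

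Finally I would assemble the conclusion. If $(A,C)$ is NUCO with data $\vartheta_{0},\vartheta_{1},\nu_{0},\nu_{1}$ and threshold $\sigma_{0}(t)$, then for every $\sigma\ge\sigma_{0}(t)$ the two operator inequalities give $\Lambda(\sigma)^{-2}\vartheta_{0}(\sigma)e^{-2\nu_{0}t}I\le M_{\tilde{A}}(t,t+\sigma)\le\Lambda(\sigma)^{2}\vartheta_{1}(\sigma)e^{2\nu_{1}t}I$, i.e.\ $(A-KC,C)$ is NUCO with the same nonuniformities and threshold and with $\vartheta_{0},\vartheta_{1}$ replaced by $\Lambda^{-2}\vartheta_{0},\Lambda^{2}\vartheta_{1}$. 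For the converse, observe that $(A,C)$ is the closed loop of $(A-KC,C)$ under the output feedback with matrix $-K$, since $(A-KC)-(-K)C=A$; by Lemma \ref{RobustezNUBG} the plant $A-KC$ has nonuniform bounded growth with the same constants $a$ and $\varepsilon$ (only $K_{0}$ changes), so \eqref{Kcondition}, \eqref{Ccondition}, \eqref{conditionGED} and \eqref{conditionAED} hold verbatim for this reversed configuration, and running the same argument starting from the NUCO of $(A-KC,C)$ yields the NUCO of $(A,C)$.
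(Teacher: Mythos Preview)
Your argument is correct and is genuinely different from, and in several respects cleaner than, the paper's proof. The paper does not pass through the Volterra operator $I+\mathcal{T}_{t,\sigma}$ at all; instead it expands $\|C(s)(\Phi_{\tilde A}(s,t)-\Phi_A(s,t))v\|^{2}$, applies Cauchy--Schwarz to the cross term, and bounds the double integral using the nonuniform bounded growth together with \eqref{Kcondition}--\eqref{conditionAED}. This produces a scalar quadratic inequality $(\varphi-1)\Lambda+2\sqrt{\vartheta_{1}(\sigma)}e^{\nu_{1}t}\sqrt{\Lambda}-\vartheta_{0}(\sigma)e^{-2\nu_{0}t}\ge 0$ in $\sqrt{\Lambda}$ with $\Lambda=v^{T}M_{\tilde A}(t,t+\sigma)v$, which is then solved by cases on the sign of $\varphi-1$; the upper bound is obtained by the symmetric identity with the roles of $\Phi_A$ and $\Phi_{\tilde A}$ swapped and Lemma~\ref{RobustezNUBG} invoked for the transition matrix of $\tilde A$.

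What each approach buys: your operator formulation is more conceptual, needs no case analysis, and---because $\Lambda(\sigma)=e^{M_{0}\sigma}$ is independent of $t$---transfers the NUCO bounds with the \emph{same} Gramian nonuniformities $\nu_{0},\nu_{1}$; the paper's quadratic route degrades the lower nonuniformity to $2\nu_{0}+\nu_{1}$. The paper's approach, on the other hand, is entirely elementary (only Cauchy--Schwarz and Gronwall) and makes the role of each hypothesis visible in the explicit constants $\varphi$ and $\psi$. Both directions of the equivalence are handled the same way in the two proofs, via the symmetry $A=\tilde A-(-K)C$ and Lemma~\ref{RobustezNUBG}. One small presentational point: when you locate the maximum of the exponent $(a-\gamma)s+(\varepsilon+\delta-a)r$ on the triangle, the relevant signs are $a-\gamma<0$ and $\varepsilon+\delta-a>0$ (the coefficient of $r$), which together force the maximum onto the diagonal $r=s$ and then, since $\varepsilon+\delta-\gamma<0$, down to $r=s=t$; your conclusion is correct, but the sentence as written lists $\varepsilon+\delta-\gamma<0$ where $\varepsilon+\delta-a>0$ is what drives the first step.
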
 

\begin{remark}
    Before proceeding with the proof of the theorem, we can note that there exists a kind of ``symmetry" between the matrix $A(t)$ and $\tilde{A}(t)$ described in \eqref{ATilde},    
    this is due to the fact that \eqref{ATilde} can be rewrite as follows
    \begin{equation*}
        % \label{AbasadoAtilde}
        A(t)=\tilde{A}(t)-(-K(t))C(t).
    \end{equation*}

    Based on the above, the implication from right to left on the Theorem \textnormal{\ref{observableconK}} can be proved based on the implication from left to right.
\end{remark}

\begin{proof}
    By considering the parameter $(s,t)$ instead of $(t_2,t_1)$ in \eqref{SolucionA-KC}, we have that
    $$\Phi_{\tilde{A}}(s,t)-\Phi_{A}(s,t)=-\displaystyle\int_{t}^{s}\Phi_{A}(s,p)K(p)C(p)\Phi_{\tilde{A}}(p,t)\; dp.$$

    Multiplying from the left on both sides of the previous equality by $C(s)$, multiplying from the right by any unit vector $v\in\mathbb{R}^{n}$ and by taking the integral with respect to $s$, of the squared norm of both sides, then
    $$\displaystyle \int_{t}^{t+\sigma}\left \| C(s)(\Phi_{\tilde{A}}(s,t)-\Phi_{A}(s,t))v\right \|^{2}ds=\displaystyle \int_{t}^{t+\sigma}\left \| C(s)\int_{t}^{s}\Phi_{A}(s,p)K(p)C(p)\Phi_{\tilde{A}}(p,t)v\;dp\right \|^{2}ds.$$

    By developing the squared Euclidean norm at the left hand side and by reordering the terms, we have that
    \begin{equation}
    \label{igualdadinicial}
    \begin{array}{rcl}
    \displaystyle \int_{t}^{t+\sigma}\left \| C(s)\Phi_{A}(s,t)v\right \|^{2}ds&=&-\displaystyle \int_{t}^{t+\sigma}\left \| C(s)\Phi_{\tilde{A}}(s,t)v\right \|^{2}ds\\
    &+&\displaystyle\int_{t}^{t+\sigma}2v^{T}\Phi_{A}^{T}(s,t)C^{T}(s)C(s)\Phi_{\tilde{A}}(s,t)v\;ds\\
    &+& \displaystyle \int_{t}^{t+\sigma}\left \| C(s)\int_{t}^{s}\Phi_{A}(s,p)K(p)C(p)\Phi_{\tilde{A}}(p,t)vdp\right \|^{2} ds.
    \end{array}\end{equation}

    Now, each terms described in the previous equality will be properly bounded in order to achieve the nonuniform complete observability of the system \eqref{ClosedLoop2a}-\eqref{ClosedLoop2b}. 

    Firstly, if the system \eqref{control1a}-\eqref{control1b} is nonuniformly completely observable, from \eqref{gramianobservability} we have that for any unitary vector $v\in\mathbb{R}^{n}$:
    \begin{equation} 
    \label{MTeorema}
    \begin{array}{rcl}
    \vartheta_0(\sigma)e^{-2\nu_0 t}\leq v^{T}M(t,t+\sigma)v&=&\displaystyle\int_{t}^{t+\sigma}\left \|C(s)\Phi_{A}(s,t)v \right \|^{2}ds,\\\\
    v^{T}M(t,t+\sigma)v&\leq& v^{T}\vartheta_1(\sigma)e^{2\nu_1 t}v=\vartheta_1(\sigma)e^{2\nu_1 t}.
    \end{array}
    \end{equation}

From this point forward, define $\Lambda$ as the term that we have to bound as follows:
    \begin{equation} 
    \label{lambda}
    \Lambda:=\displaystyle\int_{t}^{t+\sigma}\left\|C(s)\Phi_{\tilde{A}}(s,t)v \right \|^{2}ds=v^{T}M(t,t+\sigma)v.
    \end{equation}

    On the other hand, by using the Cauchy-Schwarz inequality and by considering \eqref{MTeorema} and \eqref{lambda} we have that:
    \begin{equation} \label{CSdesigualdad}
    \begin{array}{rcl}&&\displaystyle\int_{t}^{t+\sigma}2v^{T}\Phi_{A}^{T}(s,t)C^{T}(s)C(s)\Phi_{\tilde{A}}(s,t)v\;ds\\
    &\leq& \displaystyle 2\sqrt{\int_{t}^{t+\sigma}\left\|C(s)\Phi_{A}(s,t)v \right \|^{2}ds\int_{t}^{t+\sigma}\left\|C(s)\Phi_{\tilde{A}}(s,t)v \right \|^{2}ds}\\
    &\leq& 2\sqrt{\vartheta_1(\sigma)}e^{\nu_1 t}\sqrt{\Lambda}
    \end{array}
    \end{equation}

After that, by using \eqref{conditionAED} we have that:
$$\begin{array}{rcl}
\displaystyle \int _{t}^{s}\left\| \Phi_{A}(s,p)\right \|^{2}\left\| K(p)\right\|^{2}dp&\leq& \displaystyle\int_{t}^{s}K_0e^{2a(s-p)+2\varepsilon p}\mathcal{K}e^{2\delta p}dp,\\
&\leq&\displaystyle \frac{K_0\mathcal{K}}{2(-a+\varepsilon+\delta)}e^{2as}\left [ e^{2(-a+\varepsilon+\delta)s}-e^{2(-a+\varepsilon+\delta)t}\right ],\\
&\leq&\displaystyle \frac{K_0\mathcal{K}}{2(-a+\varepsilon+\delta)}e^{2(\delta+\varepsilon)s}.
\end{array}$$

Therefore, by mixing the previous estimates we obtain that:
$$\begin{array}{rcl}
&&\displaystyle \int_{t}^{t+\sigma}\left \| C(s)\int_{t}^{s}\Phi_{A}(s,p)K(p)C(p)\Phi_{\tilde{A}}(p,t)v\;dp\right \|^{2} ds\\
&\leq&\displaystyle\int_{t}^{t+\sigma}\mathcal{C}e^{-2\gamma s}\left(\int_{t}^{s}\left\|\Phi_{A}(s,p) \right \|^{2}\left \|K(p) \right \|^{2}dp \right)\left (\int_{t}^{s}\left\| C(p)\Phi_{\tilde{A}}(p,t)v\right\|^{2}dp \right)ds,\\
&\leq&\Lambda\displaystyle\int_{t}^{t+\sigma}\frac{K_0\mathcal{K}\mathcal{C}}{2(-a+\varepsilon+\delta)}e^{-2(\gamma-(\varepsilon+\delta))s}ds,\\
&=&\displaystyle\Lambda\frac{K_0\mathcal{K}\mathcal{C}}{4(-a+\varepsilon+\delta)(\gamma-(\varepsilon+\delta))}e^{-2(\gamma-(\varepsilon+\delta))t}\left[1-e^{-2(\gamma-(\varepsilon+\delta))\sigma}\right],\\
&\leq&\displaystyle\Lambda\frac{K_0\mathcal{K}\mathcal{C}}{4(-a+\varepsilon+\delta)(\gamma-(\varepsilon+\delta))}.
\end{array}$$

Based on the previous estimations, 
\eqref{igualdadinicial} is expressed in terms of the following inequality:

\begin{equation}
\label{desigualdadlambda}
\vartheta_0(\sigma)e^{-2\nu_0 t}\leq -\Lambda+2\sqrt{\vartheta_1(\sigma)}e^{\nu_1 t}\sqrt{\Lambda}+\varphi\Lambda=(\varphi-1)\Lambda+2\sqrt{\vartheta_1(\sigma)}e^{\nu_1 t}\sqrt{\Lambda}
\end{equation}
where 
$$\varphi=\frac{K_0\mathcal{K}\mathcal{C}}{4(-a+\varepsilon+\delta)(\gamma-(\varepsilon+\delta))}.
$$

We will analyze two cases: 
\begin{itemize}
    \item [a)] If $\varphi=1$, then from \eqref{desigualdadlambda} we have that:
$$
\vartheta_0(\sigma)e^{-2\nu_0 t}\leq 2\sqrt{\vartheta_1(\sigma)}e^{\nu_1 t}\sqrt{\Lambda}\;\Longrightarrow\;
\displaystyle\frac{\vartheta^{2}_0(\sigma)}{4\vartheta_1(\sigma)}e^{-(4\nu_0+2\nu_1)t}\leq\Lambda.
$$

\item[b)] If $\varphi\neq1$, then we rewrite \eqref{desigualdadlambda} as follows:
$$(\varphi-1)\Lambda+2\sqrt{\vartheta_1(\sigma)}e^{\nu_1 t}\sqrt{\Lambda}-\vartheta_0(\sigma)e^{-2\nu_0 t}\geq0,$$
and solving this inequality we obtain that:
$$\begin{array}{rcl}
\displaystyle\left |\frac{-\sqrt{\vartheta_1(\sigma)e^{2\nu_1 t}}+\sqrt{\vartheta_1(\sigma)e^{2\nu_1 t}+(\varphi-1)\vartheta_0(\sigma)e^{-2\nu_0 t}}}{\varphi-1}\right|&\leq&\sqrt{\Lambda},\\
\displaystyle \left(\frac{(\varphi-1)\vartheta_0(\sigma)e^{-2\nu_0 t}}{(\varphi-1)\left(\sqrt{\vartheta_1(\sigma)e^{2\nu_1 t}}+\sqrt{\vartheta_1(\sigma)e^{2\nu_1 t}+(\varphi-1)\vartheta_0(\sigma)e^{-2\nu_0 t}}\right)}\right)^{2}&\leq&\Lambda,\\
\displaystyle \left(\frac{\vartheta_0(\sigma)e^{-2\nu_0 t}}{\sqrt{\vartheta_1(\sigma)e^{2\nu_1 t}}+\sqrt{\vartheta_1(\sigma)e^{2\nu_1 t}+(\varphi-1)\vartheta_0(\sigma)e^{-2\nu_0 t}}}\right)^{2}&\leq&\Lambda.
\end{array}$$
Two subcases arise from this:
\end{itemize}
\begin{itemize}
    \item [b.i)] If $\varphi-1>0$, we obtain that:
    $$\begin{array}{rcl}
     \displaystyle \left(\frac{\vartheta_0(\sigma)e^{-2\nu_0 t}}{2\sqrt{\max\{\vartheta_1(\sigma),(\varphi-1)\vartheta_0(\sigma)\}e^{2\nu_1 t}}}\right)^{2}&\leq&\Lambda,\\
\displaystyle\frac{\vartheta_0^{2}(\sigma)}{4\max\{\vartheta_1(\sigma),(\varphi-1)\vartheta_0(\sigma)\}}e^{-(4\nu_0+2\nu_1)t}&\leq&\Lambda.
\end{array}$$
    \item[b.ii)] If $\varphi-1<0$, then we have that:
    $$
    \displaystyle\left ( \frac{\vartheta_0(\sigma)e^{-2\nu_0 t}}{2\sqrt{\vartheta_1(\sigma)e^{2\nu_1 t}}}\right )^{2}\leq\Lambda\;\Longrightarrow\;
    \displaystyle\frac{\vartheta^{2}_0(\sigma)}{4\vartheta_1(\sigma)}e^{-(4\nu_0+2\nu_1)t}\leq\Lambda.
    $$
\end{itemize}

Now we must establish the upper bound for $\Lambda$, which will be obtained from an argument similar to the one used to get the lower bound. Based on this kind of symmetry existing between $A(t)$ and $\tilde{A}(t)$, the ``symmetric'' counterpart of (\ref{igualdadinicial}) corresponds to exchange $\Phi_{A}(\cdot,\cdot)$ with $\Phi_{\tilde{A}}(\cdot,\cdot)$ and replacing $K(p)$ by $-K(p)$. In this way, we will have the following expression: 
\begin{equation*}
    % \label{igualdadinicial}
    \begin{array}{rcl}
    \displaystyle \int_{t}^{t+\sigma}\left \| C(s)\Phi_{\tilde{A}}(s,t)v\right \|^{2}ds&=&-\displaystyle \int_{t}^{t+\sigma}\left \| C(s)\Phi_{A}(s,t)v\right \|^{2}ds\\
    &+&\displaystyle\int_{t}^{t+\sigma}2v^{T}\Phi_{\tilde{A}}^{T}(s,t)C^{T}(s)C(s)\Phi_{A}(s,t)v\;ds\\
    &+& \displaystyle \int_{t}^{t+\sigma}\left \| C(s)\int_{t}^{s}\Phi_{\tilde{A}}(s,p)K(p)C(p)\Phi_{A}(p,t)v\;dp\right \|^{2} ds.
    \end{array}\end{equation*}

By rewriting the previous equality, we obtain that:
\begin{equation}
    \label{igualdadinicialcaso2}
    \begin{array}{rcl}
\displaystyle\int_{t}^{t+\sigma}\left \| C(s)\Phi_{A}(s,t)v\right \|^{2}ds&=&-\displaystyle \int_{t}^{t+\sigma}\left \| C(s)\Phi_{\tilde{A}}(s,t)v\right \|^{2}ds\\
&+&\displaystyle\int_{t}^{t+\sigma}2v^{T}\Phi_{\tilde{A}}^{T}(s,t)C^{T}(s)C(s)\Phi_{A}(s,t)v\;ds\\
    &+& \displaystyle \int_{t}^{t+\sigma}\left \| C(s)\int_{t}^{s}\Phi_{\tilde{A}}(s,p)K(p)C(p)\Phi_{A}(p,t)v\;dp\right \|^{2} ds.
    \end{array}\end{equation}

The second term on the right-hand side of \eqref{igualdadinicialcaso2} will be bounded by using the Cauchy-Schwarz inequality as we did it in \eqref{CSdesigualdad}. On the other hand, by considering Lemma \ref{RobustezNUBG}, \eqref{Kcondition} and \eqref{conditionAED} we have that:
$$\begin{array}{rcl}
\displaystyle \int _{t}^{s}\left\| \Phi_{\tilde{A}}(s,p)\right \|^{2}\left\| K(p)\right\|^{2}dp&\leq& \displaystyle\int_{t}^{s}K_0e^{\frac{K_0\mathcal{K}\mathcal{C}}{\gamma-(\varepsilon+\delta)}}e^{2a(s-p)+2\varepsilon p}\mathcal{K}e^{2\delta p}dp,\\
&\leq&\displaystyle \frac{K_0e^{\frac{K_0\mathcal{K}\mathcal{C}}{\gamma-(\varepsilon+\delta)}}\mathcal{K}}{2(-a+\varepsilon+\delta)}e^{2as}\left [ e^{2(-a+\varepsilon+\delta)s}-e^{2(-a+\varepsilon+\delta)t}\right ],\\
&\leq&\displaystyle \frac{K_0e^{\frac{K_0\mathcal{K}\mathcal{C}}{\gamma-(\varepsilon+\delta)}}\mathcal{K}}{2(-a+\varepsilon+\delta)}e^{2(\delta+\varepsilon)s}.
\end{array}$$

Therefore, by mixing the previous estimates:
$$\begin{array}{rcl}
&&\displaystyle \int_{t}^{t+\sigma}\left \| C(s)\int_{t}^{s}\Phi_{\tilde{A}}(s,p)K(p)C(p)\Phi_{A}(p,t)v\;dp\right \|^{2} ds\\
&\leq&\displaystyle\int_{t}^{t+\sigma}\mathcal{C}e^{-2\gamma s}\left(\int_{t}^{s}\left\|\Phi_{\tilde{A}}(s,p) \right \|^{2}\left \|K(p) \right \|^{2}dp \right)\left (\int_{t}^{s}\left\| C(p)\Phi_{A}(p,t)v\right\|^{2}dp \right)ds,\\
&\leq&\displaystyle\int_{t}^{t+\sigma}\frac{K_0e^{\frac{K_0\mathcal{K}\mathcal{C}}{\gamma-(\varepsilon+\delta)}}\mathcal{K}\mathcal{C}}{2(-a+\varepsilon+\delta)}e^{-2(\gamma-(\varepsilon+\delta))s}\vartheta_1(\sigma)e^{2\nu_1 t}ds,\\
&=&\displaystyle\frac{\vartheta_1(\sigma)K_0e^{\frac{K_0\mathcal{K}\mathcal{C}}{\gamma-(\varepsilon+\delta)}}\mathcal{K}\mathcal{C}}{4(-a+\varepsilon+\delta)(\gamma-(\varepsilon+\delta))}e^{2\nu_1 t}e^{-2(\gamma-(\delta+\varepsilon))t}\left[1-e^{-2(\gamma-(\varepsilon+\delta))\sigma}\right],\\
&\leq&\displaystyle\frac{\vartheta_1(\sigma)K_0e^{\frac{K_0\mathcal{K}\mathcal{C}}{\gamma-(\varepsilon+\delta)}}\mathcal{K}\mathcal{C}}{4(-a+\varepsilon+\delta)(\gamma-(\varepsilon+\delta))}e^{2\nu_1 t}.
\end{array}$$

On this way, we have the following:
$$\vartheta_0(\sigma)e^{-2\nu_0 t}\leq -\Lambda+2\sqrt{\vartheta_1(\sigma)}e^{\nu_1 t}\sqrt{\Lambda}+\psi e^{2\nu_1 t}$$
where 
$$\psi=\displaystyle\frac{\vartheta_1(\sigma)K_0e^{\frac{K_0\mathcal{K}\mathcal{C}}{\gamma-(\varepsilon+\delta)}}\mathcal{K}\mathcal{C}}{4(-a+\varepsilon+\delta)(\gamma-(\varepsilon+\delta))},$$
then we have that
$$\begin{array}{rcl}
     \sqrt{\Lambda}&\leq&\sqrt{\vartheta_1(\sigma)e^{2\nu_1 t}}+\sqrt{\vartheta_1(\sigma)e^{2\nu_1 t}+(\psi e^{2\nu_1 t}-\vartheta_0(\sigma)e^{-2\nu_0 t})},  \\
     \sqrt{\Lambda}&\leq&2 \sqrt{(\vartheta_1(\sigma)+\psi) e^{2\nu_1 t}},\\
     \Lambda&\leq&4(\vartheta_1(\sigma)+\psi) e^{2\nu_1 t}.
\end{array}$$

In summary, the following expressions correspond to the estimates of $\Lambda$: if $\varphi-1\leq0$ we have that
$$\displaystyle\frac{\vartheta^{2}_0(\sigma)}{4\vartheta_1(\sigma)}e^{-(4\nu_0+2\nu_1)t}\leq \Lambda\leq4(\vartheta_1(\sigma)+\psi) e^{2\nu_1 t}$$
and if $\varphi-1>0$ we obtain that:
$$\displaystyle\frac{\vartheta_0^{2}(\sigma)}{4\max\{\vartheta_1(\sigma),(\varphi-1)\vartheta_0(\sigma)\}}e^{-(4\nu_0+2\nu_1)t}\leq \Lambda\leq4(\vartheta_1(\sigma)+\psi) e^{2\nu_1 t},$$
which proves the nonuniform complete observability of the system \eqref{ClosedLoop2a}-\eqref{ClosedLoop2b}.
\end{proof}

\subsection{Consequence of duality; preservation of the nonuniform complete controllability}

This subsection will present an important consequence of the proved duality between nonuniform complete controllability and nonuniform complete observability. Specifically, based on the Theorem \ref{observableconK}, the preservation of nonuniform complete controllability via input feedback will be established.

Additionally, in order to make the following result and its proof even clearer, we will use letter notation to indicate the systems we will be working with.

\begin{theorem}
Assume that the plant of the system $(A,B)$ admits nonuniform bounded growth with constants $K_0$, $a$ and $\varepsilon$. If the following estimates are satisfied:
$$\|L^{T}(z)\|\leq\mathcal{L}e^{\ell z},$$
$$\|B^{T}(z)\|\leq\mathcal{B}e^{-\beta z},$$
$$\beta-(\varepsilon+\ell)>0,$$
the system $(A,B)$ is nonuniformly completely controllable if and only if the system $(A-BL,B)$ is nonuniformly completely controllable.    
\end{theorem}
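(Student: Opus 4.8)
The plan is to deduce the statement from Theorem \ref{observableconK} by passing through the duality of Theorem \ref{dualidad}. First I would write down the two relevant dual (observability) systems. Applying Theorem \ref{dualidad} with the pair $(-A^{T},B^{T})$ playing the role of $(A,C)$ shows that $(-A^{T},B^{T})$ is NUCO if and only if $(-(-A^{T})^{T},(B^{T})^{T})=(A,B)$ is NUCC; applying it with the pair $(-(A-BL)^{T},B^{T})=(-A^{T}+L^{T}B^{T},B^{T})$ in the role of $(A,C)$ shows that $(-A^{T}+L^{T}B^{T},B^{T})$ is NUCO if and only if $(A-BL,B)$ is NUCC. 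Hence it suffices to prove the equivalence ``$(-A^{T},B^{T})$ is NUCO $\iff$ $(-A^{T}+L^{T}B^{T},B^{T})$ is NUCO''.

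Next I would recognize this last equivalence as an instance of Theorem \ref{observableconK}. Put $\hat{A}(t):=-A^{T}(t)$, $\hat{C}(t):=B^{T}(t)$ and $\hat{K}(t):=-L^{T}(t)$; then $\hat{A}-\hat{K}\hat{C}=-A^{T}+L^{T}B^{T}$, so $(-A^{T}+L^{T}B^{T},B^{T})$ is exactly the closed-loop pair $(\hat{A}-\hat{K}\hat{C},\hat{C})$ of Theorem \ref{observableconK}. It then remains to verify the hypotheses of that theorem for $(\hat{A},\hat{C},\hat{K})$. The plant $\dot{x}=-A^{T}(t)x$ inherits nonuniform bounded growth from $\dot x=A(t)x$ with constants $K_0$, $a+\varepsilon$, $\varepsilon$, exactly as computed inside the proof of Theorem \ref{dualidad}. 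The bounds \eqref{Kcondition} and \eqref{Ccondition} read $\|\hat{K}(z)\|=\|L^{T}(z)\|\le\mathcal{L}e^{\ell z}$ and $\|\hat{C}(z)\|=\|B^{T}(z)\|\le\mathcal{B}e^{-\beta z}$, so the roles of $\mathcal{K},\delta,\mathcal{C},\gamma$ are taken by $\mathcal{L},\ell,\mathcal{B},\beta$; in particular condition \eqref{conditionGED} becomes the assumed $\beta-(\varepsilon+\ell)>0$, which moreover (via Lemma \ref{RobustezNUBG}) guarantees that the second plant $-A^{T}+L^{T}B^{T}$ also has nonuniform bounded growth, as needed for the second application of Theorem \ref{dualidad} above.

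The main obstacle — and the step I would check most carefully — is condition \eqref{conditionAED}, $-a+\varepsilon+\delta>0$. Since the relevant plant here is $-A^{T}$, whose bounded-growth exponent is $a+\varepsilon$ rather than $a$, this condition translates into $-(a+\varepsilon)+\varepsilon+\ell>0$, i.e. $\ell-a>0$, which is not literally among the three standing hypotheses. To close the argument one must therefore either include $\ell-a>0$ as an additional hypothesis, or observe that (because $z\ge 0$) the bound $\|L^{T}(z)\|\le\mathcal{L}e^{\ell z}$ remains valid with any exponent $\ell'\ge\ell$, so one is free to enlarge $\ell$ to some $\ell'>a$ as long as this stays below $\beta-\varepsilon$; this last adjustment is possible precisely when $\beta-(\varepsilon+a)>0$.

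Once this point is settled, Theorem \ref{observableconK} yields ``$(-A^{T},B^{T})$ NUCO $\iff$ $(-A^{T}+L^{T}B^{T},B^{T})$ NUCO'', and chaining this with the two applications of Theorem \ref{dualidad} gives ``$(A,B)$ NUCC $\iff$ $(A-BL,B)$ NUCC'', which is the assertion. In short, the proof is a dictionary translation: controllability of $(A,B)$ and $(A-BL,B)$ becomes observability of $(-A^{T},B^{T})$ and its output-feedback perturbation $\hat{K}=-L^{T}$, the three exponential bounds on $A,L^{T},B^{T}$ become the four hypotheses of Theorem \ref{observableconK} under the exponent shift $a\mapsto a+\varepsilon$, and the conclusion is read back through duality.
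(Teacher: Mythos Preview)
Your approach is exactly the paper's: apply Theorem~\ref{dualidad} to pass from $(A,B)$ NUCC to $(-A^{T},B^{T})$ NUCO, invoke Theorem~\ref{observableconK} to get $(-A^{T}+L^{T}B^{T},B^{T})$ NUCO, then apply Theorem~\ref{dualidad} again to conclude $(A-BL,B)$ NUCC. The paper's proof is in fact shorter than yours and does not pause to verify the hypotheses of Theorem~\ref{observableconK}; your careful checking is a genuine addition, and in particular your observation that condition~\eqref{conditionAED} (which translates here to $\ell>a$, or after enlarging $\ell$, to $\beta-(\varepsilon+a)>0$) is not covered by the three stated hypotheses is correct --- the paper simply omits this point, so you have spotted a gap in the paper's own statement rather than introduced one.
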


\begin{proof}
    By using the Theorem \ref{dualidad}, the system $(A,B)$ is nonuniformly completely controllable if and only if $(-A^{T},B^{T})$ is nonuniformly completely observable. On the other hand, by Theorem \ref{observableconK}, the system $(-A^{T}+L^{T}B^{T},B^{T})$ is also nonuniformly completely observable. Finally, by using again the Theorem \ref{dualidad}, we can conclude that $(A-BL,B)$ is nonuniformly completely controllable.
\end{proof}

\section{Conclusions and comments}

%This paper introduced a new property of controllability: the nonuniform complete controllability (NUCC), which is more general than the uniform complete controllability (UCC) but is more specific than the complete controllability (CC), both classical in control systems theory. This property is not artificial since we proved that: i) a LTV control system is UCC if and only if its plant  satisfies the property of uniform bounded growth, ii) the NUCC implies a condition encompassing the nonuniform bounded growth.

%\medskip
This work introduced a new property of observability for linear time-varying systems: the nonuniform complete observability (NUCO), which is more general than the uniform complete observability (UCO) but is more specific than the complete observability (CC), both classical in control theory. 

\medskip

First, we revisit the duality existing between controllability and observability in a nonuniform framework. This paper also proved that the nonuniformly complete observability is preseved via output feedback. In addition, we point out the fact that from the established duality, it is possible to move from observability to controllability and thus prove that controllability is preserved via input feedback.

\medskip

Moreover, based on the stabilization result obtained in \cite{HMR}, the duality proved in this work would allow us to move from nonuniform complete observability to the concept of nonuniform complete stabilizability defined in that work. 

\medskip
The current results can certainly be improved and also raise new questions:

\medskip

\noindent a) The concept of nonuniform bounded growth is very important to prove the main result of this work, therefore it would be very interesting to generalize our result by considering the nonuniform Kalman condition instead of the Kalman condition or uniform bounded growth (the equivalence was proved in \cite[Lemma 6]{HMR}).

\medskip 

\noindent b) In the same way that it is possible to relate the controllability and the stabilizability of a system; in a framework of LTV systems see more details in \cite{Ikeda} for the uniform case and \cite{HMR} for the nonuniform context; it is possible to study the correspondence between observability and detectability (see details in \cite{Tranninger} for the study in the uniform case for LTV systems). 

\medskip

In simple terms, considering the LTV system \eqref{control1a}-\eqref{control1b}, we seek to guarantee the existence of an observer described by the equation
\begin{equation*}
% \label{observador}
\dot{\hat{x}}(t)=A(t)\hat{x}(t)+P(t)[y(t)-C(t)\hat{x}(t)],
\end{equation*}
such that the linear system
\begin{equation}
        \label{SistError}
        \dot{e}(t)=[A(t)-P(t)C(t)]e(t)
    \end{equation}
with $e(t)=x(t)-\dot{\hat{x}}(t)$, will be uniformly exponentially stable. 

Formally, we have the following definition of uniform exponential detectability (see \cite{RAVI1992455,Tranninger}):
\begin{definition}
    The system \eqref{control1a}-\eqref{control1b} $(A,C)$ is called uniformly exponentially detectable if there exists a uniformly bounded output injection gain $P(t)$ such that the system \eqref{SistError} is uniformly exponentially stable. 
\end{definition}

Based on the above and considering the nonuniform exponential stabilizability defined in \cite{HMR}, it will be interesting to define the nonuniform exponential detectability and thus complement it with the concept of nonuniform complete observability.

\bibliographystyle{abbrv}

\bibliography{samplebib}

\begin{thebibliography}{10}

\bibitem{f3ec37272184481db229561565c2d5fb}
B.~Anderson, R.~Bitmead, C.~{Richard Johnson}, P.~Kokotovic, I.~Mareels,
  L.~Praly, and B.~Riedle.
\newblock {\em Stability of Adaptive Systems: Passivity and Averaging
  Analysis}.
\newblock The MIT Press, 1986.

\bibitem{Anderson67}
B.~Anderson and L.~Silverman.
\newblock Uniform complete controllability for time--varying systems.
\newblock {\em IEEE Transactions on Automatic Control}, 12(6):790--791, 1967.
\newblock 10.1109/TAC.1967.1098759.

\bibitem{Barreira}
L.~Barreira and C.~Valls.
\newblock {\em Stability of nonautonomous differential equations}.
\newblock Springer-Verlag, Berlin--Heidelberg, 2008.

\bibitem{BATISTA20173598}
P.~Batista, N.~Petit, C.~Silvestre, and P.~Oliveira.
\newblock Relaxed conditions for uniform complete observability and
  controllability of ltv systems with bounded realizations.
\newblock {\em IFAC-PapersOnLine}, 50(1):3598--3605, 2017.
\newblock 20th IFAC World Congress.

\bibitem{Chu}
J.~Chu, F.~Liao, S.~Siegmund, Y.~Xia, and W.~Zhang.
\newblock Nouniform dichotomy spectrum and reducibility for nonautomomous
  equations.
\newblock {\em Bulletin des Sciences Mathématiques}, 139:538--557, 2015.
\newblock https://doi.org/10.1016/j.bulsci.2014.11.002.

\bibitem{HMR}
I.~Huerta, P.~Monzón, and G.~Robledo.
\newblock Controllability and feedback stabilizability in a nonuniform
  framework.
\newblock {\em Comptes Rendus. Mathématique (Accepted)}, 2024.
\newblock https://arxiv.org/abs/2402.08870.

\bibitem{Ikeda}
M.~Ikeda, H.~Maeda, and S.~Kodama.
\newblock Stabilization of linear systems.
\newblock {\em SIAM Journal on Control}, 10(4):716--729, 1972.
\newblock 10.1137/0310052.

\bibitem{Ioannou10.5555/211527}
P.~A. Ioannou and J.~Sun.
\newblock {\em Robust adaptive control}.
\newblock Prentice-Hall, Inc., USA, 1995.

\bibitem{Kalman}
R.~Kalman.
\newblock Contributions to the theory of optimal control.
\newblock {\em Bol. Soc. Mat. Mexicana}, 5:102--119, 1960.
\newblock https://api.semanticscholar.org/CorpusID:845554.

\bibitem{Kalman1960}
R.~Kalman.
\newblock A new approach to linear filtering and prediction problems.
\newblock {\em ASME Journal of Basic Engineering}, 82:35--45, 1960.
\newblock http://dx.doi.org/10.1115/1.3662552.

\bibitem{Kreindler}
E.~Kreindler and P.~Sarachik.
\newblock On the concepts of controllaility and observability of linear
  systems.
\newblock {\em IEEE Transactions on Automatic Control}, 9(2):129--136, 1964.
\newblock 10.1109/TAC.1964.1105665.

\bibitem{RAVI1992455}
R.~Ravi, A.~Pascoal, and P.~Khargonekar.
\newblock Normalized coprime factorizations for linear time-varying systems.
\newblock {\em Systems \& Control Letters}, 18(6):455--465, 1992.
\newblock https://doi.org/10.1016/0167-6911(92)90050-3.

\bibitem{Rugh}
W.~J. Rugh.
\newblock {\em Linear system theory (2nd ed.)}.
\newblock Prentice-Hall, Inc., USA, 1996.

\bibitem{sastry10.5555/63437}
S.~Sastry and M.~Bodson.
\newblock {\em Adaptive control: stability, convergence, and robustness}.
\newblock Prentice-Hall, Inc., USA, 1989.

\bibitem{Tranninger}
M.~Tranninger, R.~Seeber, M.~Steinberger, and M.~Horn.
\newblock Uniform detectability of linear time varying systems with exponential
  dichotomy.
\newblock {\em IEEE Control Systems Letters}, 4(4):809--814, 2020.
\newblock 10.1109/LCSYS.2020.2992818.

\bibitem{Zhang2015ObservabilityCB}
L.~Zhang and Q.~Zhang.
\newblock Observability conservation by output feedback and observability
  gramian bounds.
\newblock {\em Automatica}, 60:38--42, 2015.
\newblock https://api.semanticscholar.org/CorpusID:36841023.

\bibitem{Zhou-16}
B.~Zhou.
\newblock On asymptotic stability of linear time--varying systems.
\newblock {\em Automatica}, 68:266--276, 2016.
\newblock https://doi.org/10.1016/j.automatica.2015.12.030.

\end{thebibliography}

\end{document}